\numberwithin{equation}{section}
\begin{document}

{\theoremstyle{plain}
    \newtheorem{theorem}{\bf Theorem}[section]
    \newtheorem{proposition}[theorem]{\bf Proposition}
    \newtheorem{claim}[theorem]{\bf Claim}
    \newtheorem{lemma}[theorem]{\bf Lemma}
    \newtheorem{corollary}[theorem]{\bf Corollary}
}
{\theoremstyle{remark}
    \newtheorem{remark}[theorem]{\bf Remark}
    \newtheorem{example}[theorem]{\bf Example}
}
{\theoremstyle{definition}
    \newtheorem{defn}[theorem]{\bf Definition}
    \newtheorem{question}[theorem]{\bf Question}
    \newtheorem{conjecture}[theorem]{\bf Conjecture}
}

%%%%%%%%%%%%%%%%%%%%%%%%%%%%%%%%%%%%%%%%%%%

\newcommand{\kset}[1]{[#1]}

\def\to{\longrightarrow}
\def\height{\operatorname{ht}}
\def\reg{\operatorname{reg}}
\def\depth{\operatorname{depth}}
\def\Hom{\operatorname{Hom}}
\def\proj{\operatorname{Proj}}
\def\grade{\operatorname{grade}}
\def\spec{\operatorname{Spec}}
\def\Tor{\operatorname{Tor}}
\def\Ext{\operatorname{Ext}}
\def\supp{\operatorname{Supp}}
\def\Ann{\operatorname{Ann}}
\def\im{\operatorname{image}}

\def\m{{\frak m}}
\def\p{{\wp}}
\def\q{{\frak q}}
\def\a{{\frak a}}
\def\d{{\frak d}}
\def\aa{{\bf a}}
\def\O{{\mathcal O}}
\def\I{{\mathcal I}}
\def\M{{\frak M}}
\def\A{{\mathcal A}}
\def\L{{\mathcal L}}
\def\NN{{\mathbb N}}
\def\N{{\mathcal N}}
\def\PP{{\mathbb P}}
\def\ZZ{{\mathbb Z}}
\def\GG{{\mathbb G}}
\def\R{{\mathcal R}}
\def\T{{\mathcal T}}
\def\F{{\mathcal F}}
\def\G{{\mathcal G}}
\def\H{{\mathcal H}}
\def\U{{\frak U}}
\def\a{{\frak a}}
\def\bx{{\widetilde{X}}}
\def\ix{{\bar{X}}}

\def\sign{\operatorname{sgn}}
\def\1{{\bf 1}}
\def\0{{\bf 0}}

%%%%%%%%%%%%%%%%%%%%%%%%%%%%%%%%%%%%%%%%%%%

\title[Asymptotic linearity of regularity and $a^*$-invariant]{Asymptotic linearity of regularity and $a^*$-invariant of powers of ideals}

\author{Huy T\`ai H\`a}  
\address{Tulane University \\ Department of Mathematics \\  
6823 St. Charles Ave. \\ New Orleans, LA 70118, USA}  
\email{tai@math.tulane.edu}  
\urladdr{http://www.math.tulane.edu/$\sim$tai/}

\keywords{regularity, $a$-invariant, asymptotic linearity, fibers of projection maps}  
\subjclass[2000]{13D45, 13D02, 14B15, 14F05} 

\begin{abstract} Let $X = \proj R$ be a projective scheme over a field $k$, and let $I \subseteq R$ be an ideal generated by forms of the same degree $d$. Let $\pi: \bx \rightarrow X$ be the blowing up of $X$ along the subscheme defined by $I$, and let $\phi: \bx \rightarrow \ix$ be the projection given by the divisor $dE_0 - E$, where $E$ is the exceptional divisor of $\pi$ and $E_0$ is the pullback of a general hyperplane in $X$. We investigate how the asymptotic linearity of the regularity and the $a^*$-invariant of $I^q$ (for $q \gg 0$) is related to invariants of fibers of $\phi$.
\end{abstract}

\maketitle

%%%%%%%%%%%%%%%%%%%%%%%%%%%%%%%%%%%%%%%%%%%%%%%%%%%%%

\section{Introduction}

Let $k$ be a field and let $X = \proj R \subseteq \PP^n$ be a projective scheme over $k$. Let $I \subseteq R$ be a homogeneous ideal. It is well known (cf. \cite{BEL, Ch, Cu, CEL, BEL, GGP, CHT, Ko, TW}) that $\reg(I^q) = aq + b$, a linear function in $q$, for $q \gg 0$. While the linear constant $a$ is quite well understood from reduction theory (see \cite{TW}), the free constant $b$ remains mysterious (see \cite{EH, R} for partial results). Recently, Eisenbud and Harris \cite{EH} showed that when $I$ is generated by general forms of the same degree, whose zeros set is empty in $X$, $b$ can be related to a set of {\it local} data, namely, the regularity of fibers of the projection map defined by the generators of $I$. The aim of this paper is to exhibit a similar phenomenon in a more general situation, when $I$ is generated by arbitrary forms of the same degree. In this case, the generators of $I$ do not necessarily give a morphism. The projection map that we will examine is the map from the blowup of $X$ along the subscheme defined by $I$, considered as a bi-projective scheme, to its second coordinate. 

Let $I = (F_0, \dots, F_m)$, where $F_0, \dots, F_m$ are homogeneous elements of degree $d$ in $R$. Let $\pi: \bx \rightarrow X$ be the blowing up of $X$ along the subscheme defined by $I$. Let $\R = R[It]$ be the Rees algebra of $I$. By letting $\deg F_it = (d,1)$, the Rees algebra $\R$ is naturally bi-graded with $\R = \bigoplus_{p,q \in \ZZ} \R_{(p,q)},$ where $\R_{(p,q)} = (I^q)_{p+qd}t^q$. Under this bi-gradation of $\R$, we can identify $\bx$ with $\proj \R \subseteq \PP^n \times \PP^m$ (cf. \cite{CH, HT}).
Also, the projection $\phi: \proj \R \rightarrow \PP^m$ is in fact the morphism given by the divisor $D = dE_0 - E$, where $E$ is the exceptional divisor of $\pi$ and $E_0$ is the pullback of a general hyperplane in $X$. For a close point $\wp \in \ix = \im(\phi)$, let $\bx_\wp = \bx \times_{\ix} \spec \O_{\ix, \wp}$ be the fiber of $\phi$ over the affine neighborhood $\spec \O_{\ix, \wp}$ of $\wp$. Then $\bx_\wp = \proj \R_{(\wp)}$, where $\R_{(\wp)}$ is the homogeneous localization of $\R$ at $\wp$. We define the {\it regularity} of $\bx_\wp$, denoted by $\reg(\bx_\wp)$, to be that of $\R_{(\wp)}$. Inspired by the work of Eisenbud and Harris \cite{EH}, we propose the following conjecture.

\begin{conjecture} \label{conj.reg}
Let $X = \proj R \subseteq \PP^n$ be a projective scheme, and let $I \subseteq R$ be a homogeneous ideal generated by forms of degree $d$. Let $\reg(\phi) = \max\{ \reg(\bx_\wp) ~|~ \wp \in \ix \}$. Then for $q \gg 0$,
$$\reg(I^q) = qd + \reg(\phi).$$
\end{conjecture}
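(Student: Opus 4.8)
\emph{A strategy toward the conjecture.}
The plan is to recast both sides of the asserted identity in terms of the bigraded local cohomology modules
$$\H^i \ :=\ H^i_{R_+}(\R), \qquad \H^i_{(p,q)} \ =\ H^i_{R_+}(I^q)_{p+qd},$$
and then to compare the asymptotic behaviour in $q$ of these modules with their behaviour under homogeneous localization along $\phi$. Writing $a_i(M)=\max\{\,j : H^i_{R_+}(M)_j\neq 0\,\}$, one has the tautology
$$\reg(I^q)\ =\ \max_i\{a_i(I^q)+i\}\ =\ qd\ +\ \max_i\{\,b_i(q)+i\,\}, \qquad b_i(q):=\max\{\,p:\H^i_{(p,q)}\neq 0\,\}.$$
On the fibre side, $\reg(\bx_\wp)=\reg(\R_{(\wp)})=\max_i\{\beta_i(\wp)+i\}$, where $\beta_i(\wp)=\max\{\mu : H^i_{(\R_{(\wp)})_+}(\R_{(\wp)})_\mu\neq 0\}$ and the grading on $\R_{(\wp)}$ is the one inherited from the first grading of $\R$. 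Since the formation of $H^i_{R_+}(-)$ commutes with homogeneous localization along $\phi$, we get $H^i_{(\R_{(\wp)})_+}(\R_{(\wp)})=(\H^i)_{(\wp)}$ and $\beta_i(\wp)=\max\{\mu:(\H^i)_{(\wp)}\text{ is nonzero in degree }\mu\}$. Putting $B_i:=\max_{\wp\in\ix}\beta_i(\wp)$ (with $\max\emptyset=-\infty$), the conjecture becomes the statement that for $q\gg 0$
$$\max_i\{\,b_i(q)+i\,\}\ =\ \max_i\{\,B_i+i\,\}\ \big(=\reg(\phi)\big),$$
the last equality being a reshuffling of maxima: $\max_i\{B_i+i\}=\max_\wp\max_i\{\beta_i(\wp)+i\}=\max_\wp\reg(\R_{(\wp)})$.

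The key structural observation is that each ``row'' $\H^i_{p,\bullet}:=\bigoplus_q\H^i_{(p,q)}$ is naturally a $\ZZ$-graded module over the fibre cone $\F:=\bigoplus_q\R_{(0,q)}=\bigoplus_q(I^q)_{qd}t^q$ of $I$, because multiplication by $\R_{(0,q')}$ preserves the first degree $p$ and raises $q$ by $q'$; note $\ix=\proj\F$. Granting suitable finiteness (see below), $\widetilde{\H^i_{p,\bullet}}$ is then a coherent sheaf on $\ix$ whose stalk at $\wp$ is $(\H^i_{p,\bullet})_{(\wp)}$, so the existence of a $\wp$ with $(\H^i)_{(\wp)}$ nonzero in first degree $p$ is governed by whether that sheaf is nonzero, equivalently by whether $\H^i_{(p,q)}\neq 0$ for $q\gg 0$. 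Combining this with a uniform upper bound $b_i(q)\le C_i$ valid for all $q$ — the Cutkosky--Herzog--Trung / Kodiyalam type estimate that underlies eventual linearity of $\reg(I^q)$ — confines the comparison to the finite range $p\le C_i$, and there one reads off $b_i(q)=B_i$ for $q\gg 0$ whenever $B_i$ is finite, while the rows with $B_i=-\infty$ contribute nothing to $b_i(q)+i$ asymptotically. This yields $\max_i\{b_i(q)+i\}=\max_i\{B_i+i\}$ for $q\gg 0$, which is exactly the conjecture.

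The hard part is precisely the finiteness input just invoked: that the rows $\H^i_{p,\bullet}$ are finitely generated over $\F$ (so that their sheafifications genuinely detect the asymptotics in $q$), together with a tameness statement guaranteeing that for each fixed $p$ the piece $\H^i_{(p,q)}$ is either nonzero for all $q\gg 0$ or zero for all $q\gg 0$, and that in the latter case the defect of $b_i(q)+i$ below $\reg(\phi)$ persists. For $i\ge 2$ this is a question about the bigraded local cohomology of the Rees algebra $\R$; tameness of local cohomology of Rees rings is known in many situations (by work of Brodmann and others), but the uniform, bigraded version adapted to the projection $\phi$ — which is what makes the generic fibre, and hence $\reg(\phi)$, control the constant — is the genuine obstacle, and the place where extra hypotheses on $R$ (such as normality or Cohen--Macaulayness) or on the singularities of $\phi$ are likely to be needed. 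Finally one must handle the homological degrees $i=0,1$ separately: $H^0_{R_+}(I^q)$ is the $R_+$-torsion of $I^q$ (which vanishes if $R$ satisfies Serre's condition $S_1$, and is uniformly bounded otherwise) and $H^1_{R_+}(I^q)$ measures the failure of $I^q$ to be $R_+$-saturated; both of these, and the corresponding $i=0,1$ terms of $\reg(\R_{(\wp)})$, must be carried through the comparison to ensure the maxima above are not disturbed.
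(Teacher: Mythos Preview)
The statement you are attempting is Conjecture~1.1, which the paper leaves \emph{open}; there is no proof in the paper to compare against. The paper establishes only the $a^*$-analogue (Theorem~2.5), the two-sided estimate $qd+a^*(\phi)\le\reg(I^q)\le qd+a^*(\phi)+\dim R$ (Theorem~3.1), and the conjecture under the extra hypothesis that every fibre $\bx_\wp$ is arithmetically Cohen--Macaulay (Theorem~3.2). Your write-up is honest about being a strategy rather than a proof, and the obstacle you isolate is the right one; below I compare your route with the paper's and say where the genuine gap lies.

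\textbf{Comparison of approaches.} You work entirely inside bigraded local cohomology: fix $i$, regard the row $\H^i_{p,\bullet}=\bigoplus_q H^i_{R_+}(I^q)_{p+qd}$ as a graded module over the special fibre ring $\F$, sheafify on $\ix=\proj\F$, and read off the asymptotics in $q$ from the stalks $(\H^i)_{(\wp)}$. The paper instead works geometrically on $\bx$: it pushes $\O_\bx(p,0)$ forward along $\phi$, uses that the higher direct images $R^j\phi_*\O_\bx(p,0)$ are \emph{automatically coherent} on $\ix$ (because $\phi$ is projective), identifies their stalks with $[H^{j+1}_{\R_{(\wp)+}}(\R_{(\wp)})]_p$ via Serre--Grothendieck, and then twists by $\O_\ix(q)$ and invokes Serre vanishing. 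The two pictures are equivalent through the Serre--Grothendieck correspondence with a shift $i\leftrightarrow j+1$, but the paper's packaging sidesteps exactly the finiteness problem you flag: coherence of $R^j\phi_*$ is free, whereas finite generation of $\H^i_{p,\bullet}$ over $\F$ is not. This is why the paper can prove the $a^*$-statement unconditionally.

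\textbf{Where the gap is.} What your outline would need, and what the paper does \emph{not} supply, is the index-by-index identity $b_i(q)=B_i$ for $q\gg 0$ and every $i$. For $i\ge 2$ the paper's machinery actually comes close: refining Lemma~2.3 one sees that $R^{i-1}\phi_*\O_\bx(p,0)=0$ precisely when $[H^i_{\R_{(\wp)+}}(\R_{(\wp)})]_p=0$ for all $\wp$, and then the degenerate Leray sequence for $\phi$ (after twisting by large $q$) together with the degenerate Leray sequence for $\pi$ (valid for $q\gg 0$) identifies $[H^i_{R_+}(I^q)]_{p+qd}$ with $H^0(\ix,R^{i-1}\phi_*\O_\bx(p,0)\otimes\O_\ix(q))$. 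The real difficulty is in the low indices $i\in\{0,1\}$, where one must compare $\phi_*\O_\bx(p,0)$ with $\widetilde{\R^2_p}$: this comparison is governed by $a^0(\phi)$ and $a^1(\phi)$ \emph{separately}, and the bookkeeping needed to convert ``$p>\max\{a^0(\phi),a^1(\phi)\}$'' into the regularity-shape inequality ``$p+i>\reg(\phi)$'' does not go through without further input. In the Cohen--Macaulay fibre case all the $a^i(\R_{(\wp)})$ with $i\le l$ are $-\infty$, so $r_\phi=l+1$ and $\reg(\phi)=a^*(\phi)+l+1$; this collapses the index-by-index problem and is exactly how Theorem~3.2 is proved. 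Absent such a hypothesis, neither your tameness/finite-generation input nor the paper's methods currently close the gap, so the statement remains a conjecture.
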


We provide a strong evidence for Conjecture \ref{conj.reg}. More precisely, we prove a similar statement to Conjecture \ref{conj.reg} for the $a^*$-invariant, a closely related variant of the regularity. For a closed point $\wp \in \ix$, we define the {\it $a^*$-invariant} of $\bx_\wp$, denoted by $a^*(\bx_\wp)$, to be the $a^*$-invariant of its homogeneous coordinate ring $\R_{(\wp)}$. Our first main result is stated as follows.

\begin{theorem}[Theorems \ref{thm.ainvariant}] \label{intro.thm2}
Let $X = \proj R \subseteq \PP^n$ be a projective scheme, and let $I \subseteq R$ be a homogeneous ideal generated by forms of degree $d$. Let $a^*(\phi) = \max\{ a^*(\bx_\wp) ~|~ \wp \in \ix \}$. Then for $q \gg 0$, we have
$$a^*(I^q) = qd + a^*(\phi).$$
\end{theorem}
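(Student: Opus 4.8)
The plan is to translate the statement about powers $I^q$ into a statement about the bigraded Rees algebra $\R = R[It]$, and then pass to local cohomology. The $a^*$-invariant of $I^q$ is governed by the vanishing of the graded pieces $H^i_{\m_0}(R/I^q)_p$ (or equivalently $H^i_{\m_0}(I^q)_p$) for large $p$, where $\m_0$ is the irrelevant ideal of $R$. Since $(I^q)_{p+qd} = \R_{(p,q)}$, what we really need to control is the $\PP^n$-graded local cohomology $H^i_{\M_0}(\R)$ of the Rees algebra, where $\M_0 \subseteq \R$ is the ideal generated by (a lift of) the irrelevant ideal of $\PP^n$; its graded piece in Rees-degree $q$ recovers $H^i_{\m_0}(I^q)$ up to the shift by $qd$. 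So the first step is to set up this dictionary carefully, so that $a^*(I^q) - qd$ becomes the top nonvanishing shift of the $q$-th Rees-graded strand of $H^\bullet_{\M_0}(\R)$.

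Next I would geometrize this. The morphism $\phi: \bx \to \ix \subseteq \PP^m$ lets us compute $H^i_{\M_0}(\R)$ via the Leray spectral sequence / direct image sheaves $R^j\phi_* \O_\bx(\text{twists})$ on $\ix$. Concretely, the $q$-th strand of $H^i_{\M_0}(\R)$ should be expressible in terms of $H^a(\ix, R^b\phi_*(\text{the sheaf corresponding to } \O(0,q)))$ — here the twist by the $\PP^n$-irrelevant variables is what produces the local cohomology along the $\PP^n$ direction on fibers. The key point is that the fiber of $\phi$ over $\wp$ is $\bx_\wp = \proj \R_{(\wp)}$, and the $a^*$-invariant of that fiber, $a^*(\bx_\wp)$, measures precisely the top twist for which the relevant sheaf cohomology of the fiber is nonzero. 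So $a^*(\phi) = \max_\wp a^*(\bx_\wp)$ is designed to be exactly the threshold past which all the fiberwise contributions vanish. The inequality $a^*(I^q) \le qd + a^*(\phi)$ for $q \gg 0$ should then follow from an upper semicontinuity / base-change argument: once we twist past $a^*(\phi)$ on every fiber, the higher direct images vanish, and for $q$ large enough the remaining low direct images contribute nothing in high $\PP^n$-degree either.

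For the reverse inequality $a^*(I^q) \ge qd + a^*(\phi)$, I would pick a point $\wp$ achieving the maximum $a^*(\bx_\wp) = a^*(\phi)$ and argue that its contribution survives: a nonzero local cohomology class on the fiber $\bx_\wp$ in the critical degree lifts (for $q \gg 0$, after possibly twisting by $q$ in the Rees direction, which only helps) to a nonzero class in $H^i_{\M_0}(\R)_{(\ast,q)}$, hence forces $a^*(I^q) \ge qd + a^*(\phi)$. This is where a semicontinuity theorem in the style of Grothendieck's cohomology-and-base-change, or a direct Rees-algebra argument using that $\R_{(\wp)}$ is (essentially) a specialization/fiber of $\R$, does the work. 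The main obstacle I anticipate is precisely this base-change step: the fibers $\bx_\wp$ need not be flat over $\ix$, and $\ix$ itself may be badly singular, so one cannot invoke the clean flat base-change statements directly. One has to either work with a suitable open dense locus of $\ix$ where things behave well and then handle the bad locus separately (showing it is "absorbed" for $q \gg 0$ because it contributes in bounded $\PP^n$-degree), or use a more robust finiteness statement — e.g. that $\bigoplus_q H^i_{\m_0}(I^q)$ is a finitely generated module over an appropriate bigraded ring, so that its behavior stabilizes. Controlling the non-flat locus and proving that the maximum over fibers is actually attained and actually contributes is the crux; the rest is bookkeeping with the bigraded-to-geometric dictionary.
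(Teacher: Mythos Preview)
Your overall architecture matches the paper's: both compare $H^j(\bx,\O_\bx(p,q))$ to $H^j(X,\widetilde{I^q}(p+qd))$ via the Leray spectral sequence for $\pi$ (which degenerates for $q\gg0$), and to fiber data via the Leray spectral sequence for $\phi$ (which degenerates for $p>a^*(\phi)$). The upper bound $a^*(I^q)\le qd+a^*(\phi)$ then falls out exactly as you describe.

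Where you go astray is in identifying the obstacle. The ``fiber'' $\bx_\wp$ in this paper is \emph{not} the scheme-theoretic fiber over the closed point $\wp$; it is $\bx\times_{\ix}\spec\O_{\ix,\wp}=\proj\R_{(\wp)}$, i.e.\ the pullback along the flat map $\spec\O_{\ix,\wp}\to\ix$ given by localization. Consequently $(R^j\phi_*\O_\bx(p,0))_\wp = H^j(\bx_\wp,\widetilde{\R_{(\wp)}}(p))^{\sim}$ on the nose, with no base-change hypothesis needed beyond flatness of localization. Your worries about non-flat families, bad loci of $\ix$, and semicontinuity are therefore phantom difficulties; the paper never confronts them because they do not arise.

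The step you \emph{do} gloss over is the one that actually requires care: the reverse inequality. Picking $\wp$ with $a^*(\bx_\wp)=a^*(\phi)$ tells you some stalk $(R^j\phi_*\O_\bx(a^*(\phi),0))_\wp$ is nonzero, hence the sheaf is nonzero; but you still need a nonzero class in $H^{j}(\bx,\O_\bx(a^*(\phi),q))$ for $q\gg0$. The paper does this by the projection formula, $R^j\phi_*\O_\bx(a,q)=R^j\phi_*\O_\bx(a,0)\otimes\O_{\ix}(q)$, so for $q\gg0$ the sheaf is globally generated and its $H^0$ is nonzero. There is also a case split: if the relevant index $r_\phi$ is $\le 1$ one cannot argue via nonvanishing of higher cohomology and must instead show $H^0(\bx,\O_\bx(a^*(\phi),q))\ne\R_{(a^*(\phi),q)}$, forcing $[H^0_{R_+}(I^q)]_{a^*(\phi)+qd}\ne0$ or $[H^1_{R_+}(I^q)]_{a^*(\phi)+qd}\ne0$. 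Your sketch does not anticipate this bifurcation.
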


As a consequence of Theorem \ref{intro.thm2}, we obtain in Theorem \ref{thm.regbound} an upper and a lower bounds for the asymptotic linear function $\reg(I^q)$. We prove that for $q \gg 0$,
$$qd + a^*(\phi) \le \reg(I^q) \le qd + a^*(\phi) + \dim R.$$
This, in particular, allows us to settle Conjecture \ref{conj.reg} an important case. A fiber $\bx_\wp$ is said to be {\it arithmetically Cohen-Macaulay} if its homogeneous coordinate ring $\R_\wp$ is Cohen-Macaulay. Our next result shows that Conjecture \ref{conj.reg} holds under the additional condition that each fiber $\bx_\wp$ is arithmetically Cohen-Macaulay. This hypothesis is satisfied, for instance, when the Rees algebra $\R$ is a Cohen-Macaulay ring. 

\begin{theorem}[Theorem \ref{thm.reg}] \label{intro.thm1}
Let $X = \proj R \subseteq \PP^n$ be an irreducible projective scheme of dimension at least 1, and let $I \subseteq R$ be a homogeneous ideal generated by forms of degree $d$. Let $\reg(\phi) = \max \{ \reg(\bx_\wp) ~|~ \wp \in \ix\}$. Assume that each fiber $\bx_\wp$ is an arithmetically Cohen-Macaulay scheme. Then for $q \gg 0$, we have
$$\reg(I^q) = qd + \reg(\phi).$$
\end{theorem}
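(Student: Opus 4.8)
The plan is to control each $a_i(I^q):=\max\{j\mid H^i_\m(I^q)_j\neq 0\}$ separately and feed the result into $\reg(I^q)=\max_i\{a_i(I^q)+i\}$: I will bound $a_i(I^q)+i$ above by $qd+\reg(\phi)$ for every $i$, and exhibit one $i$ for which equality is attained once $q\gg 0$. Recall that $\reg(I^q)$ is already known to be eventually a linear function of $q$, necessarily of the form $qd+b$ (the slope is $d$ because $I^q$ has a minimal generator in degree $qd$), and that $\reg(I^q)\geq a^*(I^q)=qd+a^*(\phi)$ by Theorem \ref{intro.thm2}. The Cohen--Macaulay hypothesis enters at one decisive point: a Cohen--Macaulay standard graded ring with nonempty Proj has positive grade along its irrelevant ideal, so $H^0_{\mathfrak{n}_\wp}(\R_{(\wp)})=0$ for every $\wp\in\ix$ (here $\mathfrak{n}_\wp$ denotes the irrelevant ideal of $\R_{(\wp)}$); consequently each $\reg(\bx_\wp)$ is attained in cohomological degree $\geq 1$, and therefore $\reg(\phi)\geq a^*(\phi)+1$.

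The technical core is a dictionary between the graded pieces of $H^\bullet_\m(I^q)$ in degrees near $qd$ and the local cohomology of the fibers. For $q\gg 0$ and $i\geq 2$,
$$H^i_\m(I^q)_{qd+\ell}\;\cong\;H^0\!\big(\PP^m,\ R^{i-1}\phi_*(\O_\bx(\ell E_0))\otimes\O_{\PP^m}(q)\big),$$
obtained by chaining together: the standard isomorphism $H^i_\m(I^q)\cong\bigoplus_j H^{i-1}(X,\widetilde{I^q}(j))$; the relative vanishing $R^{>0}\pi_*\O_\bx(jE_0-qE)=0$ for $q\gg 0$, valid since $-E$ is $\pi$-ample, which replaces $X$ by $\bx$; the identity $jE_0-qE=(j-qd)E_0+qD$ with $\O_\bx(D)=\phi^*\O_{\PP^m}(1)$; and the Leray spectral sequence of $\phi$, which degenerates for $q\gg 0$ by Serre vanishing on $\PP^m$. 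Since $E_0$ is $\phi$-ample the sheaf $R^{i-1}\phi_*(\O_\bx(\ell E_0))$ vanishes for $\ell\gg 0$, and its stalk at $\wp$ is cohomology over the \emph{local} base $\spec\O_{\ix,\wp}$, namely $H^{i-1}(\bx_\wp,\O_{\bx_\wp}(\ell))\cong H^i_{\mathfrak{n}_\wp}(\R_{(\wp)})_\ell$, with no base-change correction precisely because $\bx_\wp$ is taken over $\O_{\ix,\wp}$ rather than over the residue field. Two consequences follow for $i\geq 2$: if $H^i_\m(I^q)_j\neq 0$ with $j-qd$ bounded as $q\to\infty$, then some fiber satisfies $H^i_{\mathfrak{n}_\wp}(\R_{(\wp)})_{j-qd}\neq 0$; and conversely, if some fiber satisfies $H^i_{\mathfrak{n}_\wp}(\R_{(\wp)})_\ell\neq 0$ for a fixed $\ell$, then $H^i_\m(I^q)_{qd+\ell}\neq 0$ for all $q\gg 0$.

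Now assemble. For the upper bound, since $a_i(I^q)\leq\reg(I^q)-i=qd+b-i$ the quantity $a_i(I^q)-qd$ is bounded above, so for $i\geq 2$ the first consequence of the dictionary forces $a_i(I^q)\leq qd+\max_\wp a_i(\R_{(\wp)})$ for $q\gg 0$, whence $a_i(I^q)+i\leq qd+\max_\wp\{a_i(\R_{(\wp)})+i\}\leq qd+\reg(\phi)$; for $i=1$, Theorem \ref{intro.thm2} gives $a_1(I^q)\leq qd+a^*(\phi)$, so $a_1(I^q)+1\leq qd+a^*(\phi)+1\leq qd+\reg(\phi)$ by the first paragraph; and $a_0(I^q)$ is bounded in $q$, hence asymptotically negligible. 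Thus $\reg(I^q)\leq qd+\reg(\phi)$ for $q\gg 0$. For the lower bound, choose $\wp_0$ with $\reg(\bx_{\wp_0})=\reg(\phi)$, attained in cohomological degree $i_0$, where $i_0\geq 1$ by the first paragraph. If $i_0\geq 2$, apply the second consequence of the dictionary at $\ell_0:=a_{i_0}(\R_{(\wp_0)})$ to get $a_{i_0}(I^q)\geq qd+\ell_0$ for $q\gg 0$, whence $\reg(I^q)\geq qd+\ell_0+i_0=qd+\reg(\phi)$. If $i_0=1$, then $\reg(\phi)=a_1(\R_{(\wp_0)})+1\leq a^*(\phi)+1$, so in fact $\reg(\phi)=a^*(\phi)+1$; moreover $a^*(I^q)=qd+a^*(\phi)$ is realized as $a_j(I^q)$ for some $j$ that is eventually constant in $q$, and $j\geq 1$ because $a_0(I^q)$ is bounded, so $\reg(I^q)\geq a_j(I^q)+j\geq qd+a^*(\phi)+1=qd+\reg(\phi)$. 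Combining the two inequalities yields $\reg(I^q)=qd+\reg(\phi)$ for $q\gg 0$.

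I expect the main obstacle to be the dictionary of the second paragraph: composing those four steps into a genuine isomorphism, keeping the threshold $q\gg 0$ uniform over the bounded range of twists $\ell$ that actually matters, and computing the stalk of $R^{i-1}\phi_*$ correctly. The second, more conceptual, difficulty is that the dictionary only controls cohomological degrees $i\geq 2$; degrees $0$ and $1$ must be absorbed using Theorem \ref{intro.thm2} together with the arithmetic Cohen--Macaulayness of the fibers — which is precisely what delivers $\reg(\phi)\geq a^*(\phi)+1$ and prevents the regularity of a fiber from being carried by torsion in $\R_{(\wp)}$. Without the Cohen--Macaulay hypothesis these low-degree estimates can fail, and that failure is the reason Theorem \ref{thm.regbound} yields only bounds, with a gap of size $\dim R$.
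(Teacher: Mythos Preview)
Your proposal is correct but takes a genuinely different route from the paper.

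The paper's argument is much shorter and more structural. It first uses the irreducibility of $X$ (hence of $\bx$) to observe that each $\bx_\wp = \phi^{-1}(\spec \O_{\ix,\wp})$ is \emph{open} in $\bx$, so $\dim \R_{(\wp)} = \dim \bx + 1 = l+1 = \dim R$ for every $\wp$. The Cohen--Macaulay hypothesis then concentrates all local cohomology of $\R_{(\wp)}$ in the single degree $l+1$, giving simultaneously $a^*(\phi)=a^{l+1}(\phi)$, $\reg(\phi)=a^*(\phi)+(l+1)$, and $r_\phi=l+1\ge 2$. Since $r_\phi\ge 2$, the nonvanishing already established in Lemma~\ref{lem.nonvanishing}(2) and the last paragraph of the proof of Theorem~\ref{thm.ainvariant} give $[H^{r_\phi}_{R_+}(I^q)]_{qd+a^*(\phi)}\neq 0$ for $q\gg 0$, hence $\reg(I^q)\ge qd+a^*(\phi)+r_\phi = qd+a^*(\phi)+\dim R$. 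This matches the upper bound of Theorem~\ref{thm.regbound}, and the identity $\reg(\phi)=a^*(\phi)+\dim R$ finishes the proof in one line.

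Your approach instead bounds each $a_i(I^q)+i$ individually through the Leray dictionary, handling $i\ge 2$ via the higher direct images of $\phi$ and $i\le 1$ via Theorem~\ref{intro.thm2} together with the inequality $\reg(\phi)\ge a^*(\phi)+1$. What you gain is robustness: you never invoke irreducibility or the equal-dimension consequence, and you use the Cohen--Macaulay hypothesis only through the vanishing of $H^0_{\mathfrak n_\wp}(\R_{(\wp)})$ (positive depth of each fiber). Thus your argument would apparently prove the theorem under the weaker assumption that every $\R_{(\wp)}$ has depth $\ge 1$, and would not require $X$ irreducible. What the paper gains is brevity and transparency: once one sees that $r_\phi=\dim R$, the result follows immediately from the two-sided bound in Theorem~\ref{thm.regbound} with no further case analysis, and the full strength of the Cohen--Macaulay hypothesis is exactly what makes that identification possible.
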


Finally, we use our method to prove a special case of the following conjecture of Beheshti and Eisenbud \cite{BE}.

\begin{conjecture}[Beheshti-Eisenbud] \label{conj}
Let $R$ be a standard graded $k$-algebra of dimension $(l+1)$, and let $\m$ be the maximal homogeneous ideal of $R$. Suppose that $R$ is a domain with isolated singularity. If $k$ is infinite and $I \subseteq R$ is an ideal generated by $(l+1+c)$ general linear forms, then for $\epsilon = \lfloor l/c \rfloor$,
$$\m^{q+\epsilon} \subseteq I^q \text{ for } q \gg 0.$$
\end{conjecture}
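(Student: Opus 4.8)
The plan is to turn the containment into a statement about the asymptotic regularity of $I^q$, apply Theorem \ref{intro.thm1} to identify that regularity with $qd+\reg(\phi)$, and then bound $\reg(\phi)$ by a secant‑variety dimension count.

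\emph{Reduction to a regularity bound.} Since $k$ is infinite and $\dim R=l+1$, any $l+1$ of the $l+1+c$ general linear forms generating $I$ form a homogeneous system of parameters; hence $I$ is $\m$‑primary and $R/I^q$ has finite length for every $q$. For a finite–length graded module $M$ all higher local cohomology vanishes, so $\reg M=\max\{n: M_n\neq 0\}$; thus $\m^N\subseteq I^q$ if and only if $(R/I^q)_n=0$ for all $n\geq N$, i.e.\ if and only if $N\geq \reg(R/I^q)+1$. Since $\reg(I^q)=\reg(R/I^q)+1$ for $q\gg 0$ (once $\reg(I^q)>\reg R$), Conjecture \ref{conj} for $q\gg 0$ is \emph{equivalent} to the estimate $\reg(I^q)\leq q+\epsilon$ for $q\gg 0$.

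\emph{The geometry of $\phi$ and reduction to the fibers.} Because $I$ is $\m$‑primary, the generators $F_0,\dots,F_m$ (here $d=1$, $m=l+c$) have no common zero on $X=\proj R$; consequently the ideal sheaf they define is $\O_X$, the blowup $\pi:\bx\to X$ is an isomorphism, and $\phi$ is the \emph{finite} linear projection $X\to\PP^m$ with image $\ix$ of dimension $l$ and with centre a general $(n-l-c-1)$‑plane $Z$ disjoint from $X$. Each fiber $\bx_\wp$ is then a $0$‑dimensional scheme, cut on $X$ by the $(n-l-c)$‑plane $\Lambda_\wp$ lying over $\wp$. Using that $R$ is a domain with isolated singularity—so $X$ is smooth—and that the $F_i$ are general, one checks that the rings $\R_{(\wp)}$ are Cohen–Macaulay (the fibers are reduced, their points in linearly general position), so Theorem \ref{intro.thm1} applies and gives $\reg(I^q)=q+\reg(\phi)$ for $q\gg0$. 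By the first step it now suffices to prove $\reg(\phi)=\max_\wp\reg(\bx_\wp)\leq\epsilon=\lfloor l/c\rfloor$.

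\emph{Bounding $\reg(\phi)$.} Any $0$‑dimensional scheme of length $\delta$ has Castelnuovo–Mumford regularity at most $\delta-1$, so it is enough to bound the length of every fiber of $\phi$ by $1+\lfloor l/c\rfloor$. If $\phi$ has a fiber of length $j$, then $j$ points of $X$ spanning a $\PP^{j-1}$ must meet $Z$ in a hyperplane $\PP^{j-2}$ of that span; the family of such $\PP^{j-2}$'s has dimension at most $jl+(j-1)$ (the $j$-secant $\PP^{j-1}$'s of the smooth, nondegenerate $X$ move in a $\leq jl$‑dimensional family, with a $(j-1)$‑dimensional choice of hyperplane), whereas a general $(n-l-c-1)$‑plane $Z$ contains a member of a fixed irreducible family of $\PP^{j-2}$'s only when that family has dimension at least $(j-1)(l+c+1)$. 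Since $jl+j-1<(j-1)(l+c+1)$ precisely when $(j-1)c>l$, a general $Z$ has no fiber of length $\geq\lfloor l/c\rfloor+2$; hence all fibers have length $\leq 1+\lfloor l/c\rfloor$ and $\reg(\phi)\leq\lfloor l/c\rfloor$, which is the conjecture.

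\emph{Main obstacle.} The delicate points are all geometric and all concentrated in the middle step: for a general projection of the smooth variety $X$ one must genuinely establish that the homogeneous coordinate rings $\R_{(\wp)}$ of the fibers are Cohen–Macaulay (so that Theorem \ref{intro.thm1} is available), and one must make the secant dimension count rigorous, which requires control of the secant dimensions of $X$. Both are transparent exactly when $c\geq l$ (equivalently $\epsilon\leq1$): there the projection centre misses the tangent variety of $X$, every fiber is either a reduced point or a reduced pair of points, $\reg(\phi)\leq1$, and the argument goes through unconditionally; for $c<l$ the fibers can fail to be reduced and the Cohen–Macaulayness of $\R_{(\wp)}$ is no longer clear, so the full conjecture remains open.
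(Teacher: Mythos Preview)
This statement is Conjecture \ref{conj}; the paper does not prove it in general but only the case $c=1$ (Theorem \ref{thm.gen}). Your reduction to the inequality $\reg(I^q)\le q+\epsilon$ is exactly the Eisenbud--Harris translation the paper quotes, so that step is fine. After that, however, your route and the paper's diverge, and there is a genuine gap in yours.

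\textbf{Comparison with the paper.} For $c=1$ the paper does \emph{not} go through Theorem \ref{intro.thm1}. Instead it uses the cruder bound of Theorem \ref{thm.regbound}, $\reg(I^q)\le qd+a^*(\phi)+\dim R$, and then shows directly that $a^*(\phi)\le -1$: higher direct images of $\O_\bx(p,0)$ vanish because the closed-point fibers are $0$-dimensional, giving $a^j(\phi)=-\infty$ for $j\ge 2$; and for $j=0,1$ one checks that the natural map $(\R^2_p)_{(\wp)}\to H^0(\bx_\wp,\widetilde{\R_{(\wp)}}(p))$ is an isomorphism for every $p\ge 0$. No Cohen--Macaulay hypothesis on the fibers is needed, and this yields $\reg(I^q)\le q-1+(l+1)=q+l$ for all $l$. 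Your argument, by contrast, invokes Theorem \ref{intro.thm1}, which requires each $\R_{(\wp)}$ to be Cohen--Macaulay, and then tries to bound $\reg(\phi)$ via a secant-variety count.

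\textbf{The gap.} You have conflated two different ``fibers''. In this paper $\bx_\wp=\bx\times_{\ix}\spec\O_{\ix,\wp}=\proj\R_{(\wp)}$ is the base change to the \emph{local} scheme $\spec\O_{\ix,\wp}$, hence has dimension $l=\dim X$ (see the proof of Theorem \ref{thm.reg}); it is \emph{not} the $0$-dimensional scheme cut out on $X$ by the plane $\Lambda_\wp$ over the closed point $\wp$. Consequently, the claim ``the fiber is reduced with points in linearly general position'' does not by itself establish that the ring $\R_{(\wp)}$ is Cohen--Macaulay, and the bound $\reg(\bx_\wp)\le\delta-1$ for a length-$\delta$ zero-dimensional scheme is not a bound on $\reg(\R_{(\wp)})$. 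So neither the hypothesis of Theorem \ref{intro.thm1} nor the conclusion $\reg(\phi)\le\epsilon$ has been verified for the objects the theorem speaks about. Even in your ``unconditional'' range $c\ge l$ the argument as written does not close this gap; and that range is in any case nearly disjoint from the paper's case $c=1$, which the paper settles for all $l$ by the $a^*$-method above.
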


Eisenbud and Harris \cite[Theorem 0.1]{EH} gave a nice translation of Conjecture \ref{conj} into a problem about the asymptotic linearity of $\reg(I^q)$; that is to show that $\reg(I^q) \le q + \epsilon$ for $q \gg 0$. Based on this translation, they proved Conjecture \ref{conj} for $c = 1$ and $l \le 14$ (in \cite[Corollary 0.4]{EH}). We prove Conjecture \ref{conj} for $c = 1$ and any $l$.

\begin{theorem}[Theorem \ref{thm.gen}] \label{intro.thm3}
Let $R$ be a standard graded $k$-algebra of dimension $(l+1)$. Let $I = (L_0, \dots, L_{l+1}) \subseteq R$, where $L_i$s are linear forms whose zeros set is empty in $X = \proj R$. Then for $q \gg 0$, we have
$$\reg(I^q) \le q + l.$$
\end{theorem}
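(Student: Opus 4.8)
The plan is to deduce this from Theorem \ref{intro.thm2} (the $a^*$-invariant statement) together with the upper bound $\reg(I^q) \le qd + a^*(\phi) + \dim R$ from Theorem \ref{thm.regbound}, specialized to the case $d = 1$ and $I = (L_0, \dots, L_{l+1})$. Here $\dim R = l+1$, so it suffices to prove that $a^*(\phi) \le -1$, i.e. that every fiber $\bx_\wp$ satisfies $a^*(\bx_\wp) \le -1$; then $\reg(I^q) \le q + (l+1) + (-1) = q + l$ for $q \gg 0$, as desired. The point is that the hypothesis ``zeros set of the $L_i$ is empty in $X$'' forces the projection $\phi$ to be a genuine morphism with very small fibers, and I expect these fibers to be linear subspaces (or at least schemes with negative $a^*$-invariant).

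First I would unwind the geometry. Since $V(I) = \emptyset$ in $X = \proj R$ (so $I$ is $\m$-primary up to radical, in fact $\sqrt{I} = \m$), the blowup $\pi\colon \bx \to X$ is an isomorphism away from nothing special, but more importantly the exceptional divisor $E$ surjects onto $X$ and $\bx = \proj \R$ sits in $\PP^n \times \PP^{l+1}$ with the projection $\phi$ to $\PP^{l+1}$ being a morphism (no base points, because the $L_i$ have no common zero). Next I would analyze the fibers $\bx_\wp = \proj \R_{(\wp)}$ for $\wp \in \ix = \phi(\bx) \subseteq \PP^{l+1}$. Because $\bx \to X$ is birational and $X$ has dimension $l$, while $\ix$ also has dimension $l$ (the divisor $D = E_0 - E$ is big and the image is $l$-dimensional), a general fiber of $\phi$ is finite; and since the number of generators is exactly $\dim R + 1 = (l+1)+1$, the fibers are expected to be $0$- or $1$-dimensional, cut out by linear equations in the $\PP^n$-direction. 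I would argue that each $\bx_\wp$ is (scheme-theoretically) a linear subspace of $\PP^n$, or more precisely that $\R_{(\wp)}$ is a polynomial ring or a quotient of one by linear forms, hence Cohen–Macaulay with $a$-invariant $= -(\text{number of variables})$, which is $\le -1$. At worst I would show the fiber is arithmetically Cohen–Macaulay of the expected small dimension so that its $a^*$-invariant is $-\dim \R_{(\wp)} \le -1$.

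Concretely: localizing the Rees algebra $\R = R[L_0 t, \dots, L_{l+1} t]$ at a point $\wp \in \PP^{l+1}$ with, say, $L_0 \notin \wp$, one can normalize $L_0 t = 1$ and obtain $\R_{(\wp)} \cong R[L_1/L_0, \dots, L_{l+1}/L_0]_{(\wp)}$; since the $L_i$ have no common zero, $L_0$ is a nonzerodivisor on the relevant localization and this ring is a localization of $R$ itself, i.e. $\dim \R_{(\wp)} = \dim R - (\text{number of independent linear relations imposed by }\wp)$. A dimension count using $\dim \ix = l$ and generic finiteness of $\phi$ then pins down $\dim \bx_\wp$ and shows $a^*(\bx_\wp) = -\dim \R_{(\wp)} \le -1$ after checking the fiber is arithmetically Cohen–Macaulay (a local complete intersection cut out by linear forms, or one invokes that $R$ is a domain with isolated singularity in the Beheshti–Eisenbud setup, though for Theorem \ref{intro.thm3} as stated we only need the linear-forms hypothesis). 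Finally, feeding $a^*(\phi) \le -1$ into $\reg(I^q) \le qd + a^*(\phi) + \dim R$ with $d = 1$ and $\dim R = l+1$ yields $\reg(I^q) \le q + l$ for $q \gg 0$.

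The main obstacle, I expect, is the fiber analysis: showing that each fiber $\bx_\wp$ really does have $a^*$-invariant at most $-1$. The subtlety is that fibers of $\phi$ over the \emph{whole} image $\ix$ (not just the generic point) could a priori be nonreduced or have embedded components, and one must control $a^*(\R_{(\wp)})$ uniformly. I would handle this by a careful local computation showing $\R_{(\wp)}$ is, up to a polynomial extension by a single variable $t$, a localization of $R$ (using that one of the $L_i$ is a unit near $\wp$), reducing the $a^*$-invariant bound to the statement that a positive-dimensional localization of a standard graded ring, regraded appropriately, has negative $a^*$-invariant — which holds because such a ring has no top local cohomology in nonnegative degrees once we have at least one variable of positive degree acting as a nonzerodivisor. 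If the fibers turn out to be exactly points (dimension $0$), then $\R_{(\wp)} = k$ or an Artinian ring concentrated in degree $0$, giving $a^*(\bx_\wp) = 0$, which would only give $\reg(I^q) \le q + l + 1$; so the crux is genuinely to show the fibers are \emph{positive-dimensional} linear spaces, which is exactly where the count ``$(l+1)+1$ linear forms on an $(l+1)$-dimensional ring'' is used.
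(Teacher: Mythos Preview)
Your opening reduction is exactly right and matches the paper: apply Theorem~\ref{thm.regbound} with $d=1$ and $\dim R = l+1$, so the whole task is to prove $a^*(\phi)\le -1$.

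The gap is in your fiber analysis, which rests on a confusion about what $\bx_\wp$ means. In this paper $\bx_\wp = \bx \times_{\ix} \spec \O_{\ix,\wp} = \proj \R_{(\wp)}$ is the base change to the \emph{local ring} at $\wp$, not the closed fiber; it is an open piece of $\bx$ and always has $\dim \bx_\wp = \dim X = l$ (see the proof of Theorem~\ref{thm.reg}). So your worry that ``if the fibers turn out to be points then $\R_{(\wp)}$ is Artinian with $a^*=0$'' never arises, and the ``crux'' you identify --- forcing the fibers to be positive-dimensional linear spaces --- is aimed at the wrong object. Relatedly, the claim that $\R_{(\wp)}$ is a polynomial ring or a quotient by linear forms is unjustified: $R$ is an arbitrary standard graded $k$-algebra, so $\R_{(\wp)}$ inherits whatever singularities $R$ has and need not be Cohen--Macaulay, let alone a linear space.

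The paper's argument runs in the opposite direction from yours. It uses that the \emph{closed} fibers of $\phi$ are zero-dimensional (this is where \cite{BE} and the count ``$l+2$ forms on an $(l+1)$-dimensional ring'' enter) to conclude, via \cite[Corollary~III.11.2]{Hartshorne}, that $R^j\phi_*\O_\bx(p,0)=0$ for $j>0$; by the Serre--Grothendieck correspondence this gives $a^j(\phi)=-\infty$ for all $j\ge 2$. The remaining inequalities $a^0(\phi)\le -1$ and $a^1(\phi)\le -1$ are then handled not by computing dimensions or canonical modules, but by checking directly that the natural map $(\R^2_p)_{(\wp)} \to H^0(\bx_\wp,\widetilde{\R_{(\wp)}}(p))$ is an isomorphism for every $p\ge 0$: for $p=0$ this is $\phi_*\O_{\bx_\wp}=\O_{\ix,\wp}$, and for $p\ge 1$ it follows because $\R_{(\wp)}$ is generated in degree~$1$. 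No Cohen--Macaulayness of $\R_{(\wp)}$ is needed.
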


Our method in proving Theorem \ref{intro.thm2}, and subsequently Theorem \ref{intro.thm1}, is based upon investigating different graded structures of the Rees algebra $\R$. More precisely, beside the natural bi-graded structure mentioned above, $\R$ possesses two other $\NN$-graded structures; namely
\begin{align*}
\R &  = \bigoplus_{q \in \ZZ} \R^1_q, \text{ where } \R^1_q = \bigoplus_{p \in \ZZ} \R_{(p,q)}, \text{ and } \\
\R & = \bigoplus_{p \in \ZZ} \R^2_p, \text{ where } \R^2_p = \bigoplus_{q \in \ZZ} \R_{(p,q)}.
\end{align*} 
Under these $\NN$-graded structures, it can be seen that $R = \R^1_0 \hookrightarrow \R$, $\R^1_q$ is a graded $R$-modules for any $q \in \ZZ$, $S = \R^2_0 \hookrightarrow \R$, and $\R^2_p$ is a graded $S$-modules for any $p \in \ZZ$. Let $\widetilde{\R^1_q}$ be the coherent sheaf associated to $\R^1_q$ on $X$, and let $\widetilde{\R^2_p}$ be the coherent sheaf associated to $\R^2_p$ on $\ix$. Observe further that $\R^1_q = \bigoplus_{p \in \ZZ} \big[I^q\big]_{p+qd} = I^q(qd)$, the module $I^q$ shifted by $qd$. As a consequence, for any $p, q \in \ZZ$ we have
$$\widetilde{\R^1_q}(p) = \widetilde{I^q}(p+qd).$$
Thus, to study the regularity of $I^q$, we examine sheaf cohomology groups of $\widetilde{\R^1_q}(p)$. Our results are obtained by investigating how these sheaf cohomology groups behave by pulling back via the blowup map $\pi$ and pushing forward through the projection map $\phi$.

To prove Theorem \ref{intro.thm3}, we make use of Theorem \ref{thm.regbound} to bring the problem to showing that $a^*(\phi) \le -1$. This is then accomplished by proving that for any $i \ge 0$ and any $\wp \in \ix$, $a^i(\R_{(\wp)}) \le -1$.

Our paper is outlined as follows. In the next section, we consider $\bx$ as a biprojective scheme and prove a similar statement to Conjecture \ref{conj.reg} for the $a^*$-invariant. In the last section, we prove special cases of Conjectures \ref{conj.reg} and \ref{conj}. 

\noindent{\bf Acknowledgement.} The author thanks C. Polini and B. Ulrich for stimulating discussions on the topic, and thanks D. Eisenbud for explaining their results in \cite{EH}. The author is partially supported by Board of Regents Grant LEQSF(2007-10)-RD-A-30 and Tulane's Research Enhancement Fund.

%%%%%%%%%%%%%%%%%%%%%%%%%%%%%%%%%%%%%%%%%%%%%%%%%%%%%%%

\section{Bi-projective schemes and $a^*$-invariants} \label{section.ainv}

The goal of this section is to give a similar statement to Conjecture \ref{conj.reg} for the $a^*$-invariant of powers of an ideal.
We first recall the definition of regularity and $a^*$-invariant.

\begin{defn} For any $\NN$-graded algebra $T$, let $T_+$ denote its irrelevant ideal. For $i \ge 0$, let
$a^i(T) = \max \{ l ~|~ [H^i_{T_+}(T)]_l \not= 0\}$ (if $H^i_{T_+}(T) = 0$ then take $a^i(T) = -\infty$). The {\it $a^*$-invariant} and the {\it regularity} of $T$ are defined to be
$$a^*(T) = \max_{i \ge 0} \{ a^i(T) \} \text{ and } \reg(T) = \max_{i \ge 0} \{ a^i(T) + i\}.$$
\end{defn}
\noindent Note that $H^i_{T_+}(T) = 0$ for $i > \dim T$, so $a^*(T)$ and $\reg(T)$ are well-defined and finite invariants.

Let $S$ denote the homogeneous coordinate ring of $\ix \subseteq \PP^m$. For each closed point $\p \in \ix$, i.e., $\p$ is a homogeneous prime ideal in $S$, let $\R_\p$ be the localization of $\R$ at $\p$; that is, $\R_\p = \R \otimes_S S_\p$. The {\it homogeneous localization} of $\R$ at $\p$, denoted by $\R_{(\p)}$, is the collection of all element of degree 0 (in $t$) of $\R_\p$. 
Observe that homogeneous localization at $\p$ annihilates the grading with respect to powers of $t$. Thus, inheriting from the bi-graded structure of $\R$, the homogeneous localization $\R_{(\wp)}$ is a $\NN$-graded ring. The regularity and $a^*$-invariant of $\R_{(\wp)}$ are therefore defined as usual. 

Associated to $\phi: \bx \rightarrow \ix$, let
\begin{align*}
a^i(\phi) & = \max \{a^i(\R_{(\wp)}) ~|~ \wp \in \ix\} \text{ for } i \ge 0, \\
a^*(\phi) & = \max \{ a^*(\R_{(\wp)}) ~|~ \wp \in \ix\}, \text{ and } \\
\reg(\phi) & = \max \{ \reg(\R_{(\wp)}) ~|~ \wp \in \ix\}.
\end{align*}

\begin{remark} \label{rmk.finite}
By definition, $a^*(\phi) = \max_{i \ge 0} \{a^i(\phi)\}$ and $\reg(\phi) = \max_{i \ge 0} \{a^i(\phi) + i\}$. Note that $H^i_{\R_{(\p)+}}(\R_{(\p)}) = \big[H^i_{\R_+}(\R)\big]_{(\p)}$, where on the right hand side we view $\R$ under its $\NN$-graded structure $\R = \bigoplus_{p \in \ZZ} \R^2_p$, which induces the embedding $S \hookrightarrow \R$. Thus, $a^i(\phi)$ is a well-defined and finite invariant for any $i \ge 0$. As a consequence, $a^*(\phi)$ and $\reg(\phi)$ are well-defined and finite invariants. These invariants are defined in a similar fashion to the {\it projective $a^*$-invariant} that was introduced in \cite{HT}. We shall also let $r_\phi$ denote the smallest integer $r$ such that 
$$a^*(\phi) = a^r(\phi).$$
\end{remark}

Recall further that the Rees algebra $\R = R[It]$ of $I$ is naturally bi-graded with $\R_{(p,q)} = (I^q)_{p+qd} t^q$, and we identify $\bx$ with $\proj \R \subseteq \PP^n \times \PP^m$. It can also be seen that $\bx$ and $\ix$ can be realized as the (closure of the) graph and the (closure of the) image of the rational map $\varphi: X \dashrightarrow \PP^m$ given by $P \mapsto [F_0(P): \dots: F_m(P)]$ (cf. \cite{CH, HT}). Under this identification, $\pi$ and $\phi$ are restrictions on $\bx$ of natural projections $\PP^n \times \PP^m \rightarrow \PP^n$ and $\PP^n \times \PP^m \rightarrow \PP^m$. We have the following diagram:
$$\begin{array}{rcccl} & & \bx & \subseteq & \PP^n \times \PP^m \\
\pi & \swarrow & & \searrow & \phi \\
X & & \stackrel{\varphi}{\dasharrow} & & \ix \end{array}$$

Let $\I$ be the ideal sheaf of $I$, and let $\L = \I \O_\bx = \O_\bx(0,1).$

\begin{lemma} \label{lem.grading} 
With notations as above.
\begin{enumerate}
\item The homogeneous coordinate ring of $\ix$ is $S \simeq k[F_0t, \dots, F_mt]$.
\item $\phi^* \O_\ix(q) = \L^q \otimes \pi^* \O_X(qd) \ \forall \ q \in \ZZ.$
\item $\O_\bx(p,q) = \pi^* \O_X(p) \otimes \phi^* \O_\ix(q) \simeq \L^q \otimes \pi^* \O_X(p+qd) \ \forall \ p,q \in \ZZ.$
\end{enumerate}
\end{lemma}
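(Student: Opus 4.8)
The plan is to reduce each of the three assertions to standard facts about the Rees algebra, the blowup $\pi$, and the projection $\phi$; only (2) carries genuine content. For (1), I would start from $S = \R^2_0 = \bigoplus_{q}\R_{(0,q)}$ and substitute $\R_{(p,q)} = (I^q)_{p+qd}t^q$ to get $S = \bigoplus_{q\ge 0}(I^q)_{qd}t^q$. The one point to check is that, because $I$ is generated by the degree-$d$ forms $F_0,\dots,F_m$ over the standard graded ring $R$, the space $(I^q)_{qd}$ is exactly the $k$-span of the degree-$q$ monomials in the $F_i$ (multiplying a degree-$qd$ generator $F_{i_1}\cdots F_{i_q}$ by anything of positive degree leaves degree $qd$); hence $S$ is the subalgebra $k[F_0t,\dots,F_mt]\subseteq\R$. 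That this is the homogeneous coordinate ring of $\ix$ follows from the realization of $\ix$ as the closure of the image of $\varphi\colon X\dashrightarrow\PP^m$, $P\mapsto[F_0(P):\dots:F_m(P)]$: its homogeneous coordinate ring is $k[y_0,\dots,y_m]$ modulo the kernel of the substitution $y_i\mapsto F_i$, and reindexing by powers of $t$ (assigning the $F_i$ degree $1$) identifies that quotient with $k[F_0t,\dots,F_mt]$; the kernel is automatically saturated since $\ix\neq\emptyset$.

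For (2), I would invoke the description of $\phi$ recalled in the Introduction from \cite{CH, HT}: $\phi$ is the morphism associated to the divisor $D = dE_0 - E$, so, writing $\phi$ also for the composite $\bx\to\ix\hookrightarrow\PP^m$, we have $\phi^*\O_\ix(1) = \phi^*\big(\O_{\PP^m}(1)|_\ix\big) = \O_\bx(D)$. Now $E_0 = \pi^*H$ for a general hyperplane $H\subseteq X$, so $\O_\bx(E_0) = \pi^*\O_X(1)$, and $E$ is the exceptional divisor, so $\O_\bx(-E) = \I\O_\bx = \L$ (the pullback ideal of a blowup being invertible, together with the definition of $\L$). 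Therefore
$$\phi^*\O_\ix(1) = \O_\bx(dE_0 - E) = \pi^*\O_X(1)^{\otimes d}\otimes\O_\bx(-E) = \L\otimes\pi^*\O_X(d),$$
and taking $q$-th tensor powers (with inverses when $q<0$), using $\O_\ix(q) = \O_\ix(1)^{\otimes q}$, gives $\phi^*\O_\ix(q) = \L^q\otimes\pi^*\O_X(qd)$ for all $q\in\ZZ$. A reader who prefers not to quote the divisor description can obtain (2) algebraically: $\bx = \mathrm{Proj}_X\!\big(\bigoplus_{q\ge 0}\widetilde{\R^1_q}\big)$ and $\widetilde{\R^1_q} = \widetilde{I^q}(qd) = \I^q(qd)$, so $\bx$ is $\mathrm{Bl}_\I X$ with its tautological sheaf twisted by $\pi^*\O_X(d)$ at level $1$, whence $\phi^*\O_\ix(1) = (\I\O_\bx)\otimes\pi^*\O_X(d) = \L\otimes\pi^*\O_X(d)$.

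For (3), the first equality is formal: $\bx\subseteq\PP^n\times\PP^m$ with $\pi$ and $\phi$ the restrictions of the two projections, and $\O_{\PP^n\times\PP^m}(p,q) = \mathrm{pr}_1^*\O_{\PP^n}(p)\otimes\mathrm{pr}_2^*\O_{\PP^m}(q)$, so restricting to $\bx$ and using $\O_X(p) = \O_{\PP^n}(p)|_X$, $\O_\ix(q) = \O_{\PP^m}(q)|_\ix$ yields $\O_\bx(p,q) = \pi^*\O_X(p)\otimes\phi^*\O_\ix(q)$; substituting (2) then gives $\O_\bx(p,q)\simeq\L^q\otimes\pi^*\O_X(p+qd)$. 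The only real obstacle is (2): one must correctly identify $\phi$ with the map defined by $dE_0-E$ and, in doing so, be careful that $\phi^*\O_\ix(1)$ is not merely $\L = \I\O_\bx$ but carries the extra twist $\pi^*\O_X(d)$ coming from the generators of $I$ having degree $d$ — this twist is exactly what produces the term $qd$ throughout the rest of the paper.
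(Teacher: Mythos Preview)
Your argument is correct and fleshes out exactly what the paper's own proof leaves implicit: the paper simply asserts that (1) follows from the construction of $\varphi$ and that (2) and (3) follow from the graded structures of $R$, $\R$, and $S$, with no further detail. Your divisor-theoretic derivation of (2) via $D = dE_0 - E$ and your alternative ``graded'' derivation both recover the content the paper alludes to, so this is the same approach, only made explicit.
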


\begin{proof} (1) follows from the construction of $\varphi$. (2) and (3) follow from the graded structures of $R, \R$ and $S$.
\end{proof}

The next few lemmas exhibit how the $a^*$-invariant of fibers of $\phi$ governs sheaf cohomology groups via a push forward along $\phi$.

\begin{lemma} \label{lem.vanishing}
Let $p > a^*(\phi)$. Then
\begin{enumerate}
\item $\phi_* \O_\bx(p,q) = \widetilde{\R^2_p}(q)$ and $R^j \phi_* \O_\bx(p,q) = 0$ for any $j > 0$ and any $q \in \ZZ$,
\item $H^i(\bx, \O_\bx(p,q)) = 0$ for $i > 0$ and $q \gg 0$.
\end{enumerate}
\end{lemma}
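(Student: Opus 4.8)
The plan is to analyze the two $\NN$-graded structures on $\R$ separately and connect them through the bi-graded Serre–Grothendieck correspondence on $\bx \subseteq \PP^n \times \PP^m$. Recall that for $p > a^*(\phi)$ and any closed point $\wp \in \ix$ we have $[H^i_{\R_+}(\R)]_{(p,q)}$ vanishing "pointwise" on $\ix$: indeed $H^i_{\R_{(\wp)+}}(\R_{(\wp)}) = [H^i_{\R_+}(\R)]_{(\wp)}$ as noted in Remark \ref{rmk.finite}, and $a^*(\R_{(\wp)}) \le a^*(\phi) < p$ forces $[H^i_{\R_+}(\R)]_{(p,*)}$ to localize to zero at every $\wp$, hence to be zero as a sheaf on $\ix$ in the appropriate $S$-grading. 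From this I extract local cohomology vanishing for the $S$-module $\R^2_p = \bigoplus_q \R_{(p,q)}$.

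First I would prove (1). Viewing $\R$ under the grading $\R = \bigoplus_p \R^2_p$ with $S \hookrightarrow \R = \R^2_0$, the module $\R^2_p$ is a finitely generated graded $S$-module. The point $\spec \O_{\ix,\wp}$-fiber of $\phi$ is $\proj \R_{(\wp)}$, and the sheaf $\widetilde{\R^2_p}$ on $\ix$ is exactly what computes $\phi_*$ of the line bundle twist $\O_\bx(p, \cdot)$: by the projection formula and Lemma \ref{lem.grading}(3), $\phi_* \O_\bx(p,q) = (\phi_* \O_\bx(p,0))(q) = \widetilde{\R^2_p}(q)$, provided the higher direct images vanish. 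The vanishing $R^j\phi_*\O_\bx(p,q) = 0$ for $j > 0$ is a local statement over $\ix$: over $\spec \O_{\ix,\wp}$ it reduces to the statement that the graded $\R_{(\wp)}$-module $\R_{(\wp)}(\text{twist})$ has vanishing higher sheaf cohomology on $\proj \R_{(\wp)}$ in the relevant degrees, and that is controlled precisely by $a^j(\R_{(\wp)}) < p$. I would phrase this via the exact sequence relating a finitely generated graded module, its associated sheaf's cohomology, and local cohomology ($H^j_{\R_+}(\R)$ in the grading over $S$), invoking $a^*(\phi) < p$ to kill the obstruction. This is the step I expect to be the main obstacle: bookkeeping the interaction between the two gradings, making sure the "fiberwise" local cohomology statement in Remark \ref{rmk.finite} is correctly transported to a statement about $R^j\phi_*$ on all of $\ix$, and handling the twist $q$ uniformly (the higher direct images vanish for all $q$, not just $q \gg 0$, because the line bundle $\O_\ix(1)$ is relatively trivial for $\phi$ up to pullback from $\ix$).

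Then (2) follows quickly from (1) via the Leray spectral sequence $H^a(\ix, R^b\phi_* \O_\bx(p,q)) \Rightarrow H^{a+b}(\bx, \O_\bx(p,q))$. By part (1) the spectral sequence degenerates to $H^i(\bx, \O_\bx(p,q)) = H^i(\ix, \widetilde{\R^2_p}(q))$. Since $\widetilde{\R^2_p}$ is a coherent sheaf on the projective scheme $\ix \subseteq \PP^m$ and $\O_\ix(1)$ is ample, Serre vanishing gives $H^i(\ix, \widetilde{\R^2_p}(q)) = 0$ for all $i > 0$ once $q \gg 0$. Combining, $H^i(\bx, \O_\bx(p,q)) = 0$ for $i > 0$ and $q \gg 0$, as claimed. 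The only care needed is that the threshold for $q$ may depend on $p$ and $i$, which is harmless since $p$ is fixed and $i$ ranges over the finitely many values $1 \le i \le \dim \bx$.
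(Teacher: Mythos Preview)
Your proposal is correct and follows essentially the same route as the paper: reduce to $q=0$ via the projection formula (using $\O_\bx(0,q)=\phi^*\O_\ix(q)$), compute $R^j\phi_*\O_\bx(p,0)$ locally over $\spec\O_{\ix,\wp}$ as the sheafification of $H^j(\bx_\wp,\widetilde{\R_{(\wp)}}(p))$, and use the Serre--Grothendieck correspondence on $\R_{(\wp)}$ together with $p>a^*(\R_{(\wp)})$ to see that this is $(\R^2_p)_{(\wp)}$ for $j=0$ and $0$ for $j>0$; then (2) is exactly Leray plus Serre vanishing. One small wording issue: the identification $\phi_*\O_\bx(p,0)=\widetilde{\R^2_p}$ is not a consequence of the higher direct images vanishing as you phrase it, but rather of the vanishing of $[H^0_{\R_{(\wp)+}}(\R_{(\wp)})]_p$ and $[H^1_{\R_{(\wp)+}}(\R_{(\wp)})]_p$ in the same Serre--Grothendieck sequence---so both parts of (1) come out of that sequence simultaneously, not one from the other.
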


\begin{proof} By Lemma \ref{lem.grading} and the projection formula we have 
$$\phi_* \O_\bx(p,q) = \phi_*\O_\bx(p,0) \otimes \O_{\ix}(q) \text{ and } R^j \phi_* \O_\bx(p,q) = R^j \phi_* \O_\bx(p,0) \otimes \O_\ix(q).$$
Thus, to show (1) it suffices to prove the assertion for $q =0$.

Let $\p$ be any closed point of $\ix$, and consider the restriction $\phi_\wp: \bx_\p = \proj \R_{(\p)} \rightarrow \spec \O_{\ix,\p}$ of $\phi$ over an open affine neighborhood $\spec \O_{\ix, \p}$ of $\p$. We have
\begin{align}
R^j \phi_* \O_\bx(p,0) \Big|_{\spec \O_{\ix,\p}} = R^j \phi_* \Big(\widetilde{\R_{(\p)}}(p)\Big) = H^j(\bx_\p, \widetilde{\R_{(\p)}}(p))\!\!\!^{\widetilde{\quad \quad}} \ \forall \ j \ge 0. \label{eq.affine}
\end{align}

For any $j \ge 0$ and any $\p \in \ix$, we have $p > a^*(\phi) \ge a^j(\R_{(\p)})$; and thus, $\big[H^j_{\R_{(\p)+}}(\R_{(\p)})\big]_p = 0$.
Moreover, the Serre-Grothendieck correspondence give us an exact sequence
$$0 \rightarrow \big[H^0_{\R_{(\p)+}}(\R_{(\p)})\big]_p \rightarrow \big[\R_{(\p)}\big]_p = \big(\R^2_p\big)_{(\p)} \rightarrow H^0(\bx_\p, \widetilde{\R_{(\p)}}(p)) \rightarrow \big[H^1_{\R_{(\p)+}}(\R_{(\p)})\big]_p \rightarrow 0$$
and isomorphisms
$$H^i(\bx_\p, \widetilde{\R_{(\p)}}(p)) \simeq \big[H^{i+1}_{\R_{(\p)+}}(\R_{(\p)})\big]_p \text{ for } i > 0.$$
Therefore, for any $j \ge 0$ and any $\p \in \ix$, 
$$R^j \phi_* \O_\bx(p,0) \Big|_{\spec \O_{\ix,\p}} = H^j(\bx_\p, \widetilde{\R_{(\p)}}(p))\!\!\!^{\widetilde{\quad \quad}} = \left\{ \begin{array}{ll} \widetilde{(\R^2_p)_{(\p)}} & \text{ for } j = 0 \\
0 & \text{ for } j > 0. \end{array} \right.$$
This is true for any $\p \in \ix$, and so (1) is proved.

Now, it follows from (1) that the Leray spectral sequence $H^i(\ix, R^j \phi_* \O_\bx(p,q)) \Rightarrow H^{i+j}(\bx, \O_\bx(p,q))$ degenerates. Thus, for any $j \ge 0$,
$$H^j(\bx, \O_\bx(p,q)) = H^j(\ix, \widetilde{\R^2_p}(q)).$$
Moreover, since $\O_\ix(1)$ is a very ample divisor, we have $H^j(\ix, \widetilde{\R^2_p}(q)) = 0$ for all $q \gg 0$, and (2) is proved.
\end{proof}

\begin{lemma} \label{lem.nonvanishing}
Let $r_\phi$ be defined as above.
\begin{enumerate}
\item If $r_\phi \le 1$ then $H^0(\bx, \O_\bx(a^*(\phi),q)) \not= \R_{(a^*(\phi),q)}$ for $q \gg 0$.
\item If $r_\phi \ge 2$ then $H^{r_\phi-1}(\bx, \O_\bx(a^*(\phi),q)) \not= 0$ for $q \gg 0$.
\end{enumerate}
\end{lemma}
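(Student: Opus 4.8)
The plan is to mirror the structure of the proof of Lemma \ref{lem.vanishing}, pushing forward along $\phi$, but this time with $p = a^*(\phi)$, so that the vanishing of the higher direct images and the degeneration of the Leray spectral sequence are no longer available on the nose. Fix $p = a^*(\phi)$. As in \eqref{eq.affine}, on an affine neighborhood $\spec \O_{\ix,\wp}$ of a closed point $\wp \in \ix$ we have $R^j\phi_* \O_\bx(p,0)|_{\spec\O_{\ix,\wp}}$ is the sheafification of $H^j(\bx_\wp, \widetilde{\R_{(\wp)}}(p))$, which by Serre--Grothendieck is controlled by $[H^j_{\R_{(\wp)+}}(\R_{(\wp)})]_p$ (the degree-$p$ piece of local cohomology) for $j \ge 1$, and fits into the four-term exact sequence for $j = 0,1$. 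By definition of $r_\phi$, the module $[H^j_{\R_{(\wp)+}}(\R_{(\wp)})]_p$ vanishes for all $j < r_\phi$ and is nonzero for $j = r_\phi$ at some point $\wp$. Hence $R^j\phi_*\O_\bx(p,q) = 0$ for $1 \le j \le r_\phi - 1$ and all $q$, while $R^{r_\phi}\phi_*\O_\bx(p,q)$ (when $r_\phi \ge 1$) is a nonzero sheaf on $\ix$ supported at the points where $[H^{r_\phi}_{\R_{(\wp)+}}(\R_{(\wp)})]_p \neq 0$.

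Next I would feed this into the Leray spectral sequence $H^i(\ix, R^j\phi_*\O_\bx(p,q)) \Rightarrow H^{i+j}(\bx, \O_\bx(p,q))$. For part (2), with $r_\phi \ge 2$: since $R^j\phi_* = 0$ for $1 \le j \le r_\phi - 1$, there are no differentials into or out of the $E_2$ term $H^0(\ix, R^{r_\phi}\phi_*\O_\bx(p,q))$ from the relevant range, so it survives, giving $H^{r_\phi}(\bx, \O_\bx(p,q)) \supseteq H^0(\ix, R^{r_\phi}\phi_*\O_\bx(p,q))$ — wait, I should be careful with the indexing: I want to land in cohomological degree $r_\phi - 1$ of $\bx$, so in fact the relevant nonzero higher direct image is $R^{r_\phi - 1 + 1}$... let me instead track it as follows. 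The cleanest route is: the term $H^{i+j} = H^{r_\phi - 1}$ receives contributions from $(i,j)$ with $i + j = r_\phi - 1$; by the vanishing $R^j\phi_* = 0$ for $1 \le j \le r_\phi - 1$, only $j = 0$ contributes, so $H^{r_\phi-1}(\bx, \O_\bx(p,q)) = H^{r_\phi - 1}(\ix, \widetilde{\R^2_p}(q))$, and for $q \gg 0$ this vanishes by Serre — which would be the \emph{wrong} conclusion. The resolution is that the statement must be using a different indexing convention, or that $r_\phi - 1$ should be read against the direct-image degree: I would re-examine whether the intended argument is that $R^{r_\phi}\phi_*\O_\bx(p,q) \neq 0$ and $H^0(\ix, R^{r_\phi}\phi_*\O_\bx(p,q)) \neq 0$ (the latter because for $q \gg 0$ global sections of any nonzero coherent sheaf on $\ix$ twisted up are nonzero), and then, since $R^j\phi_* = 0$ for $1 \le j \le r_\phi - 1$, there are no incoming differentials killing this $E_2^{0,r_\phi}$ term and no outgoing ones either in the range that matters, so $H^{r_\phi}(\bx, \O_\bx(p,q)) \neq 0$; the displayed index $r_\phi - 1$ in the statement should then be $r_\phi$, or the convention for $r_\phi$ differs by one. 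I would settle this bookkeeping before writing the final version.

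For part (1), with $r_\phi \le 1$: here the point is the four-term exact sequence. If $r_\phi = 1$, then $[H^1_{\R_{(\wp)+}}(\R_{(\wp)})]_p \neq 0$ for some $\wp$ while $[H^0_{\R_{(\wp)+}}(\R_{(\wp)})]_p = 0$ for all $\wp$, so the sequence reads $0 \to (\R^2_p)_{(\wp)} \to H^0(\bx_\wp, \widetilde{\R_{(\wp)}}(p)) \to [H^1]_p \to 0$, showing $\phi_*\O_\bx(p,0)$ strictly contains $\widetilde{\R^2_p}$; twisting by $\O_\ix(q)$ and taking $H^0$ for $q \gg 0$ (so that $H^1(\ix, \widetilde{\R^2_p}(q)) = 0$ and both sheaves have their cohomology computed by global sections) gives $H^0(\bx, \O_\bx(p,q)) = H^0(\ix, \phi_*\O_\bx(p,q)) \supsetneq H^0(\ix, \widetilde{\R^2_p}(q)) = (\R^2_p)_q = \R_{(p,q)}$. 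If instead $r_\phi = 0$, then $[H^0_{\R_{(\wp)+}}(\R_{(\wp)})]_p \neq 0$ for some $\wp$, i.e. $\R_{(\wp)}$ has a nonzero degree-$p$ torsion element at that point, and the same exact sequence shows $\phi_*\O_\bx(p,0)$ differs from $\widetilde{\R^2_p}$ (now possibly smaller), again yielding $H^0(\bx,\O_\bx(p,q)) \neq \R_{(p,q)}$ for $q \gg 0$ after twisting up. The main obstacle, as flagged above, is getting the spectral-sequence bookkeeping and the precise indexing of $r_\phi$ exactly right so that ``no nonzero differential can kill the surviving term''; the cohomological-vanishing inputs themselves ($R^j\phi_* = 0$ in the right range, Serre vanishing on $\ix$ for $q \gg 0$, and nonvanishing of global sections of a nonzero coherent sheaf after a large twist) are all routine once that is pinned down.
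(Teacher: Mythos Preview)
Your overall plan---compute the stalks of $R^j\phi_*\O_\bx(p,0)$ via Serre--Grothendieck on the fibers, then run the Leray spectral sequence---is exactly the paper's, and your argument for part~(1) is essentially the paper's argument.

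The confusion in part~(2) is a concrete off-by-one error on your side, not a typo in the statement. The Serre--Grothendieck correspondence carries a shift: for $j \ge 1$,
\[
H^j\bigl(\bx_\wp,\widetilde{\R_{(\wp)}}(p)\bigr)\;\simeq\;\bigl[H^{\,j+1}_{\R_{(\wp)+}}(\R_{(\wp)})\bigr]_p,
\]
whereas you matched sheaf $H^j$ with local $H^j$. With $p = a^*(\phi)$ and $r_\phi \ge 2$, the vanishing of $[H^i_{\ldots}]_p$ for all $\wp$ and all $i < r_\phi$ therefore yields $R^j\phi_*\O_\bx(p,0) = 0$ for $0 < j < r_\phi - 1$, and the nonvanishing at $i = r_\phi$ for some $\wp$ yields $R^{r_\phi-1}\phi_*\O_\bx(p,0) \neq 0$; this is precisely the paper's display~(\ref{eq.higher}). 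Now the Leray spectral sequence behaves exactly as you wanted it to: for $q \gg 0$ the term $E_2^{r_\phi-1,0} = H^{r_\phi-1}(\ix,\phi_*\O_\bx(p,q))$ vanishes by Serre, the intermediate $R^j\phi_*$ with $0<j<r_\phi-1$ are zero, and the only surviving contribution to $H^{r_\phi-1}(\bx,\O_\bx(p,q))$ is $H^0(\ix,R^{r_\phi-1}\phi_*\O_\bx(p,q))$, which is nonzero for $q\gg 0$ since the nonzero coherent sheaf $R^{r_\phi-1}\phi_*\O_\bx(p,0)\otimes\O_\ix(q)$ is globally generated for large $q$. So the index $r_\phi-1$ in the lemma is correct; your suspicion that it should read $r_\phi$ came entirely from the missing shift.
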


\begin{proof} For simplicity, let $a = a^*(\phi)$. By the definition of $r_\phi$, we have
\begin{align}
\left\{ \begin{array}{ll} \big[H^i_{\R_{(\p)+}}(\R_{(\p)})\big]_a = 0 & \text{ for } i < r_\phi \text{ and any } \p \in \ix \\
\big[H^{r_\phi}_{\R_{(\q)+}}(\R_{(\q)})\big]_a \not= 0 & \text{ for some } \q \in \ix. \end{array} \right. \label{eq.nonvanishing}
\end{align}

(1) If $r_\phi \le 1$ then it follows from (\ref{eq.nonvanishing}) and the Serre-Grothendieck correspondence that $H^0(\bx_\q, \widetilde{\R_{(\q)}}(a)) \not= \big[\R_{(\q)}\big]_a = \big(\R^2_a\big)_{(\q)}$. This and (\ref{eq.affine}) imply that $\phi_* \O_\bx(a,0) \not= \widetilde{\R^2_a}$, and so
$$\phi_* \O_\bx(a,q) \not= \widetilde{\R^2_a}(q) \text{ for any } q \in \ZZ.$$
Since both $\phi_* \O_\bx(a,q) = \phi_* \O_\bx(a,0) \otimes \O_\ix(q)$ (by Lemma \ref{lem.grading} and the projection formula) and $\widetilde{\R^2_a}(q)$ are generated by global sections for $q \gg 0$, we must have 
$$H^0(\ix, \phi_* \O_\bx(a,q)) \not= H^0(\ix, \widetilde{\R^2_a}(q)) = \R_{(a,q)} \ \forall \ q \gg 0.$$
Moreover, $H^0(\bx, \O_\bx(a,q)) = H^0(\ix, \phi_* \O_\bx(a,q))$. Thus,
$$H^0(\bx, \O_\bx(a,q)) \not= \R_{(a,q)} \text{ for } q \gg 0.$$

(2) If $r_\phi \ge 2$, then it follows from (\ref{eq.nonvanishing}) and (\ref{eq.affine}) that
\begin{align}
\left\{ \begin{array}{l} R^j \phi_* \O_\bx(a,q) = 0 \text{ for } 0 < j < r_\phi-1, \\
R^{r_\phi-1} \phi_* \O_\bx(a,q) \not= 0. \end{array} \right. \label{eq.higher}
\end{align}
By Lemma \ref{lem.grading} and the projection formula, $\phi_* \O_\bx(a,q) = \phi_* \O_\bx(a,0) \otimes \O_\ix(q)$. Thus, for $q \gg 0$ we have $H^{r_\phi-1}(\ix, \phi_* \O_\bx(a,q)) = 0.$ From this, together with (\ref{eq.higher}) and the Leray spectral sequence
$H^i(\ix, R^j \phi_* \O_\bx(a,q)) \Rightarrow H^{i+j}(\bx, \O_\bx(a,q))$, we can deduce that
$$H^{r_\phi-1}(\bx, \O_\bx(a,q)) = H^0(\ix, R^{r_\phi-1} \phi_* \O_\bx(a,q)) \text{ for } q \gg 0.$$
It then follows, since $R^{r_\phi-1} \phi_* \O_\bx(a,q) = R^{r_\phi-1} \phi_* \O_\bx(a,0) \otimes \O_\ix(q)$ is globally generated for $q \gg 0$, that
$$H^{r_\phi-1}(\bx, \O_\bx(a,q)) \not= 0 \text{ for } q \gg 0.$$
\end{proof}

Our first main result is a similar statement to Conjecture \ref{conj.reg} for the $a^*$-invariant.

\begin{theorem} \label{thm.ainvariant}
Let $X = \proj R \subseteq \PP^n$ be a projective scheme, and let $I \subseteq R$ be a homogeneous ideal generated by forms of degree $d$. Let $a^*(\phi) = \max\{ a^*(\bx_\wp) ~|~ \wp \in \ix \}$. Then for $q \gg 0$, we have
$$a^*(I^q) = qd + a^*(\phi).$$
\end{theorem}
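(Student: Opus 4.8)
The plan is to reduce the statement, by shifting, to proving $a^*(\R^1_q) = a^*(\phi)$ for $q \gg 0$, where $\R^1_q = I^q(qd)$, and then to compute each $a^i(\R^1_q)$ out of sheaf cohomology on $\bx$ via the two projections $\pi$ and $\phi$. The bridge to $\bx$ is a standard consequence of the $\proj_X$-description of $\bx$: since $\L = \O_\bx(0,1) = \O_\bx(-E)$ is $\pi$-ample, relative Serre vanishing together with the projection formula (Lemma \ref{lem.grading}) give, for $q \gg 0$ and every $p \in \ZZ$,
\[
\pi_*\O_\bx(p,q) = \widetilde{\R^1_q}(p) = \widetilde{I^q}(p+qd), \qquad R^j\pi_*\O_\bx(p,q) = 0 \ \ (j \ge 1),
\]
with the threshold uniform in $p$ (the twist $\O_X(p+qd)$ is locally free). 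Hence the Leray spectral sequence degenerates and $H^i(X,\widetilde{\R^1_q}(p)) = H^i(\bx,\O_\bx(p,q))$ for all $i$ and all $q \gg 0$.

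Next I would translate $a^i(\R^1_q)$ into these groups via the Serre--Grothendieck correspondence: $[H^i_{R_+}(\R^1_q)]_p \cong H^{i-1}(X,\widetilde{\R^1_q}(p))$ for $i \ge 2$, while $[H^1_{R_+}(\R^1_q)]_p$ and $[H^0_{R_+}(\R^1_q)]_p$ are the cokernel and kernel of the natural map $[\R^1_q]_p \to H^0(X,\widetilde{\R^1_q}(p))$. Because $\R^1_q \hookrightarrow R(qd)$ one has $a^0(\R^1_q) \le a^0(R) - qd \to -\infty$; so for $q \gg 0$ the index $i=0$ never realizes $a^*(\R^1_q)$ (recall $a^*(\phi)$ is finite), and the comparison map is injective in every degree that matters. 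Thus $a^*(\R^1_q)$ is controlled entirely by the groups $H^j(\bx,\O_\bx(p,q))$, $j \ge 0$, which are exactly what Lemmas \ref{lem.vanishing} and \ref{lem.nonvanishing} measure in terms of $a^*(\phi)$.

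For the upper bound $a^*(\R^1_q) \le a^*(\phi)$, fix $p > a^*(\phi)$: Lemma \ref{lem.vanishing}(2) gives $H^{i-1}(\bx,\O_\bx(p,q)) = 0$ for $i \ge 2$ and $q \gg 0$, while Lemma \ref{lem.vanishing}(1) together with $H^0(\ix,\widetilde{\R^2_p}(q)) = [\R^2_p]_q = \R_{(p,q)} = [\R^1_q]_p$ for $q \gg 0$ forces $[H^1_{R_+}(\R^1_q)]_p = 0$. To make ``$q \gg 0$'' uniform over the infinitely many $p > a^*(\phi)$, I would note that $\R^1 = R[\R^1_1]$ is generated over $R$ by $\R^1_1 = I(d)$, which is generated in degree $0$ (the Rees generators $F_it$ have bidegree $(d,1)$); hence $\reg(\R^1_q)$, and a fortiori each $a^i(\R^1_q)$, is bounded above independently of $q$ by the asymptotic-linearity results of \cite{CHT, Ko} (the slope of $\reg(I^q)$ is $d$ since $I$ is generated in degree $d$), so only finitely many $p$ need to be examined. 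For the lower bound, set $a = a^*(\phi)$ and apply Lemma \ref{lem.nonvanishing}: if $r_\phi \ge 2$, then for $q \gg 0$, $0 \ne H^{r_\phi - 1}(\bx,\O_\bx(a,q)) = H^{r_\phi-1}(X,\widetilde{\R^1_q}(a)) = [H^{r_\phi}_{R_+}(\R^1_q)]_a$, so $a^{r_\phi}(\R^1_q) \ge a$; if $r_\phi \le 1$, then $H^0(\bx,\O_\bx(a,q)) \ne \R_{(a,q)} = [\R^1_q]_a$ and, since $[H^0_{R_+}(\R^1_q)]_a = 0$ for $q \gg 0$, the comparison map fails to be surjective in degree $a$, so $[H^1_{R_+}(\R^1_q)]_a \ne 0$ and $a^1(\R^1_q) \ge a$. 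Combining the two bounds yields $a^*(\R^1_q) = a^*(\phi)$, i.e. $a^*(I^q) = qd + a^*(\phi)$ for $q \gg 0$.

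The step I expect to be the real obstacle is precisely the uniformity of the threshold $q \gg 0$ across all twists $p$ in the upper-bound argument: the vanishing in Lemma \ref{lem.vanishing} is obtained one value of $p$ at a time, so one genuinely needs the a priori estimate $a^i(I^q) \le qd + C$ with $C$ independent of $q$ to cut down to finitely many cases --- this is where the standard asymptotic linearity of $\reg(I^q)$, with slope $d$ forced by $I$ being generated in a single degree, must be brought in. A minor point to handle with care is the compatibility of the two identifications of $[\R^1_q]_p$ with spaces of global sections --- one coming from $X$ through $\pi$, one from $\ix$ through $\phi$ --- so that the map appearing in the Serre--Grothendieck sequence is indeed the restriction map used in Lemma \ref{lem.nonvanishing}.
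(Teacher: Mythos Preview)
Your proposal is correct and follows essentially the same architecture as the paper's proof: degenerate the Leray spectral sequence along $\pi$ to identify $H^j(X,\widetilde{I^q}(p+qd))$ with $H^j(\bx,\O_\bx(p,q))$, then use Lemma~\ref{lem.vanishing} for the upper bound and Lemma~\ref{lem.nonvanishing} (split into the cases $r_\phi\le 1$ and $r_\phi\ge 2$) for the lower bound, translating back via Serre--Grothendieck.

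The one substantive difference is that you are explicit about the uniformity of the threshold ``$q\gg 0$'' over all $p>a^*(\phi)$, whereas the paper invokes Lemma~\ref{lem.vanishing}(2) as though it were uniform in $p$ without comment. Your fix---bounding $a^*(I^q)-qd$ above by a constant using the known asymptotic linearity $\reg(I^q)=dq+b$ (with slope $d$ forced by equigeneration) and thereby reducing to finitely many values of $p$---is valid and not circular, since the results of \cite{CHT,Ko,TW} are established independently. Your separate treatment of $a^0(\R^1_q)$ via $H^0_{R_+}(I^q)\subseteq H^0_{R_+}(R)$ is also a clean addition; the paper instead folds $i=0,1$ together by showing the comparison map is an isomorphism, which is equivalent but slightly less explicit. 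The compatibility of the two identifications of $[\R^1_q]_p$ that you flag is routine: both are the natural restriction map $\R_{(p,q)}\to H^0(\bx,\O_\bx(p,q))$, seen through the two projections.
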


\begin{proof} By a similar argument as in Lemma \ref{lem.vanishing}, considering $\pi_*$ instead of $\phi_*$, we can show that for $q \gg 0$,
\begin{align}
\pi_* \O_\bx(p,q) = \widetilde{\R^1_q}(p) = \widetilde{I^q}(p+qd) \text{ and } R^j \pi_* \O_\bx(p,q) = 0 \ \forall \ j > 0. \label{eq.newLeray}
\end{align}
This implies that for $q \gg 0$, the Leray spectral sequence $H^i(X, R^j \pi_* \O_\bx(p,q)) \Rightarrow H^{i+j}(\bx, \O_\bx(p,q))$ degenerates and we have
$$H^j(\bx, \O_\bx(p,q)) = H^j(X, \widetilde{I^q}(p+qd)) \ \forall \ j \ge 0.$$
Therefore, for $j > 0$, $q \gg 0$ and $p > a^*(\phi)$, it follows from Lemma \ref{lem.vanishing} that $H^j(X, \widetilde{I^q}(p+qd)) = 0$. That is, 
\begin{align}
\big[H^{j+1}_{R_+}(I^q)\big]_{p+qd} = 0. \label{eq.degree}
\end{align}
Furthermore, for $j = 0$ and $q \gg 0$, we have
$H^0(\ix, \widetilde{\R^2_p}(q)) = H^0(\bx, \O_\bx(p,q)) = H^0(X, \widetilde{I^q}(p+qd))$, where the first equality follows from Lemma \ref{lem.vanishing}. 
On the other hand, for $q \gg 0$, $H^0(\ix, \widetilde{\R^2_p}(q)) = (\R^2_p)_q = \R_{(p,q)} = [I^q]_{p+qd}$. Thus, for $q \gg 0$, $H^0(X, \widetilde{I^q}(p+qd)) = [I^q]_{p+qd}$.
This and (\ref{eq.degree}) imply that for $q \gg 0$, 
$$a^*(I^q) \le qd + a^*(\phi).$$

To prove the other inequality, let $r_\phi$ be defined as in Remark \ref{rmk.finite}. We consider two cases: $r_\phi \le 1$ and $r_\phi \ge 2$. If $r_\phi \le 1$ then by Lemma \ref{lem.nonvanishing}, $H^0(\bx, \O_\bx(a^*(\phi), q)) \not= \R_{(a^*(\phi),q)}$ for all $q \gg 0$. This implies that $H^0(X, \pi_* \O_\bx(a^*(\phi),q)) \not= \R_{(a^*(\phi),q)}$ for $q \gg 0$. That is, 
$$H^0(X, \widetilde{I^q}(a^*(\phi)+qd)) \not= \big[I^q\big]_{a^*(\phi)+qd} \ \forall \ q \gg 0.$$ 
By the Serre-Grothendieck correspondence, for $q \gg 0$, we have either $$\big[H^0_{R_+}(I^q)\big]_{(a^*(\phi)+qd,q)} \not= 0 \text{ or } \big[H^1_{R_+}(I^q)\big]_{(a^*(\phi)+qd,q)} \not= 0.$$
It then follows that
$a^*(I^q) \ge qd+a^*(\phi) \text{ for } q \gg 0$.

If $r_\phi \ge 2$, then by Lemma \ref{lem.nonvanishing}, $H^{r_\phi-1}(\bx, \O_\bx(a^*(\phi),q)) \not= 0$ for $q \gg 0$. Moreover, for $q \gg 0$, it follows from (\ref{eq.newLeray}) that the Leray spectral sequence 
$$H^i(X, R^j \pi_* \O_\bx(p,q)) \Rightarrow H^{i+j}(\bx, \O_\bx(p,q))$$ 
degenerates. Thus, for $q \gg 0$, we have $H^{r_\phi-1}(X, \widetilde{I^q}(a^*(\phi)+qd)) \not= 0$. By the Serre-Grothendieck correspondence, we have $\big[H^{r_\phi}_{R_+}(I^q)\big]_{a^*(\phi)+qd} \not=0$ for $q \gg 0$. This implies that $a^*(I^q) \ge qd+a^*(\phi)$ for $q \gg 0$.
\end{proof}

\begin{example} \label{example.minor}
Let $R = \bigoplus_{n \ge 0}R_n$ be a Cohen-Macaulay standard graded domain, and let $A = (a_{ij})_{1\le i \le r, 1\le j \le s}$ be an $r \times s$ matrix ($r \le s$) of entries in $R_1$. Let $I_t(A)$ denote the ideal generated by $t \times t$ minors of $A$, and let $I = I_r(A)$. Assume that for any $1\le t \le r$, $\height I_t(A) \ge (r-t+1)(s-r)+1$. Let $\imath(\omega_R)$ be the least generating degree of $\omega_R$, the canonical module of $R$. Then for $q \gg 0$,
$$a^*(I^q) = qr - \imath(\omega_R).$$
Indeed, let $S = k[It]$ denote the homogeneous coordinate ring of $\ix$, let $\wp$ be any point in $\ix$, and let $T = \R_{(\wp)}$. By \cite[Theorem 3.5]{EHu}, the Rees algebra $\R$ is Cohen-Macaulay. Thus, $\R_{(\wp)}$ is Cohen-Macaulay. This implies that
$$a^*(T) = a^{\dim T}(T) = - \min \{ s ~|~ [\omega_T]_s \not= 0\}.$$ 
Furthermore, by \cite[Example 3.8]{HSV}, 
$$\omega_\R = \omega_R(1,t)^{g-2} = \omega_R \oplus \omega_Rt \oplus \dots \oplus \omega_Rt^{g-2} \oplus \omega_RIt^{g-1} \oplus \dots,$$
where $g = \height I$. Hence, by localizing at $\wp$, we obtain 
$$\omega_T = \big(\omega_\R\big)_{(\wp)} = \big(\omega_R(1,t)^{g-2}\big)_{(\wp)}.$$ 
Observe that the homogeneous localization at $\wp$ annihilates the grading inherited from powers of $t$, so it follows that the degrees of $\omega_T$ arise from the degrees of $\omega_R$. That is, $\imath(\omega_T) = \imath(\omega_R)$, and the conclusion follows from Theorem \ref{thm.ainvariant}.
\end{example}

%%%%%%%%%%%%%%%%%%%%%%%%%%%%%%%%%%%%%%%%%%%%%%%%%%%%%%%%%%%%%%

\section{Regularity of powers of ideals and generic projections} \label{section.reg}

In this section, we investigate the asymptotic linearity of regularity and prove an important special case of Conjecture \ref{conj.reg}. As an application, we also show that Conjecture \ref{conj} holds when $c = 1$.

We start by giving an upper and a lower bound for the free constant of $\reg(I^q)$ in terms of $a^*(\phi)$.

\begin{theorem} \label{thm.regbound}
Let $X = \proj R \subseteq \PP^n$ be a projective scheme, and let $I \subseteq R$ be a homogeneous ideal generated by forms of degree $d$. Let $a^*(\phi) = \max\{ a^*(\bx_\wp) ~|~ \wp \in \ix \}$. Then there exists an integer $0 \le r \le \dim R$ such that for $q \gg 0$, we have $\reg(I^q) = qd + a^*(\phi) + r.$ In particular, for $q \gg 0$,
$$qd + a^*(\phi) \le \reg(I^q) \le qd + a^*(\phi) + \dim R.$$
\end{theorem}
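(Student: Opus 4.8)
The plan is to derive the statement from Theorem \ref{thm.ainvariant} together with the cohomological description of $\reg(I^q)$ in terms of the blowup $\bx$ and the two projections $\pi,\phi$. Recall that $\reg(I^q) = \max_{i\ge 0}\{a^i(I^q)+i\}$, and that by the local-cohomology/sheaf-cohomology correspondence, for $i\ge 1$ one has $a^i(I^q) = \max\{p \mid H^{i-1}(X,\widetilde{I^q}(p))\ne 0\}$, while the $i=0$ and $i=1$ terms are controlled by the cokernel of $[I^q]_{p} \to H^0(X,\widetilde{I^q}(p))$. Via the identifications $\widetilde{I^q}(p+qd) = \widetilde{\R^1_q}(p) = \pi_*\O_\bx(p,q)$ for $q\gg 0$ (equation (\ref{eq.newLeray})) and the degeneration of the Leray spectral sequence for $\pi$, we get $H^j(X,\widetilde{I^q}(p+qd)) = H^j(\bx,\O_\bx(p,q))$ for all $j\ge 0$ and $q\gg 0$. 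Thus all the information needed to compute $\reg(I^q)$ is encoded in the groups $H^j(\bx,\O_\bx(p,q))$.

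The upper bound $\reg(I^q)\le qd+a^*(\phi)+\dim R$ comes directly from Theorem \ref{thm.ainvariant}: since $a^*(I^q)=qd+a^*(\phi)$, every $a^i(I^q)\le qd+a^*(\phi)$, and $a^i(I^q)=-\infty$ for $i>\dim R$ (as $H^i_{R_+}(I^q)=0$ there), so $\reg(I^q)=\max_i\{a^i(I^q)+i\}\le qd+a^*(\phi)+\dim R$. For the lower bound, I would use the proof of Theorem \ref{thm.ainvariant}: there it is shown that for $q\gg0$ there is some cohomological degree (either $0$/$1$ when $r_\phi\le1$, or $r_\phi$ when $r_\phi\ge2$) in which $H^{\bullet}_{R_+}(I^q)$ is nonzero in degree exactly $qd+a^*(\phi)$; that degree is $\ge qd+a^*(\phi)$, giving $\reg(I^q)\ge qd+a^*(\phi)$. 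Combining, $qd+a^*(\phi)\le\reg(I^q)\le qd+a^*(\phi)+\dim R$.

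The more substantial point is the first assertion, that $\reg(I^q)-qd-a^*(\phi)$ stabilizes to a \emph{constant} $r$ with $0\le r\le\dim R$ for all $q\gg0$, rather than merely oscillating in that range. Here I would argue that each $a^i(I^q)$ is itself eventually a linear function of $q$ (this is the known asymptotic linearity of $a^i$, cf. \cite{CHT, Ko, TW}, applied coordinate-wise), and that by Theorem \ref{thm.ainvariant} the slope of each such linear function is $\le d$ with equality of the value $a^i(I^q)=qd+a^*(\phi)$ attained for the indices realizing the maximum. More carefully: for $q\gg0$ write $a^i(I^q)=\alpha_i q+\beta_i$; Theorem \ref{thm.ainvariant} forces $\alpha_i\le d$ for all $i$, and $\alpha_i=d,\ \beta_i=a^*(\phi)$ for at least one $i$. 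Then for $q\gg0$ the function $\max_i\{a^i(I^q)+i\}$ equals $\max\{\,i \mid \alpha_i=d,\ \beta_i=a^*(\phi)\,\}+qd+a^*(\phi)$, since any index with $\alpha_i<d$ contributes a term that is eventually dominated. Setting $r$ to be that maximal index gives $\reg(I^q)=qd+a^*(\phi)+r$ with $0\le r$; and $r\le\dim R$ because $a^i(I^q)=-\infty$ for $i>\dim R$. I expect the main obstacle to be making the appeal to eventual linearity of the individual $a^i(I^q)$ fully rigorous in this generality—one should either cite it precisely or re-derive it from the sheaf-cohomology picture on $\bx$ by pushing forward along $\phi$ as in Lemmas \ref{lem.vanishing} and \ref{lem.nonvanishing}, tracking that each $H^j(\bx,\O_\bx(p,q))$ vanishes for $p$ large (uniformly in $q$ once $q\gg0$) and is nonzero down to a value of $p$ that grows linearly in $q$ with slope $d$.
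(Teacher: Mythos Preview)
Your derivation of the two inequalities is essentially the paper's: both follow immediately from Theorem \ref{thm.ainvariant} together with the elementary sandwich $a^*(M)\le\reg(M)\le a^*(M)+\dim R$, valid for any finitely generated graded $R$-module $M$. The paper does not unpack the Serre--Grothendieck correspondence or the Leray spectral sequence again here; that work already sits inside the proof of Theorem~\ref{thm.ainvariant}.

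Where you diverge is in establishing that $\reg(I^q)-qd-a^*(\phi)$ is eventually \emph{constant}. The paper's argument is a one-liner: the asymptotic linearity $\reg(I^q)=aq+b$ for $q\gg 0$ is taken as known from \cite{CHT,Ko,TW} (already cited in the introduction), and then the sandwich above, combined with Theorem~\ref{thm.ainvariant}, forces $a=d$; setting $r=b-a^*(\phi)$ finishes. Your route instead posits that each individual $a^i(I^q)$ is eventually linear in $q$. That is not what \cite{CHT,Ko,TW} prove---they give linearity of $\reg$ (and one can get $a^*$), not of the separate $a^i$---so the ``main obstacle'' you flag is genuine, and also unnecessary: you are trying to establish a refinement of the very linearity result you could simply cite. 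In addition, your formula for $r$ is not quite right: even granting $a^i(I^q)=\alpha_i q+\beta_i$, the eventual value of $\reg(I^q)-qd$ is $\max\{\beta_i+i:\alpha_i=d\}$, not $a^*(\phi)+\max\{i:\alpha_i=d,\ \beta_i=a^*(\phi)\}$; an index $i$ with $\alpha_i=d$ and $\beta_i<a^*(\phi)$ can still maximize $\beta_i+i$. This does not affect the conclusion (a maximum of finitely many linear functions of the same slope is linear), but it shows the detour adds complication without payoff.
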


\begin{proof} Suppose $\reg(I^q) = aq + b$ for $q \gg 0$. It can be easily seen from the definition of the regularity and $a^*$-invariant of graded $R$-modules that $a^*(I^q) \le \reg(I^q) \le a^*(I^q) + \dim R$ for any $q$. This and Theorem \ref{thm.ainvariant} imply that $a = d$; that is, $\reg(I^q) = qd + b$ for $q \gg 0$. Let $r = b - a^*(\phi)$. Then $\reg(I^q) = qd + a^*(\phi) +r$, and since $a^*(I^q) \le \reg(I^q) \le a^*(I^q) + \dim R$, we have $0 \le r \le \dim R$.
\end{proof}

Our next result shows that Conjecture \ref{conj.reg} holds under an extra condition that each fiber $\bx_\wp$ is an arithmetically Cohen-Macaulay scheme.

\begin{theorem} \label{thm.reg}
Let $X = \proj R \subseteq \PP^n$ be an irreducible projective scheme of dimension at least 1, and let $I \subseteq R$ be a homogeneous ideal generated by forms of degree $d$. Let $\reg(\phi) = \max \{ \reg(\bx_\wp) ~|~ \wp \in \ix\}$. Assume that each fiber $\bx_\wp$ is an arithmetically Cohen-Macaulay scheme. Then for $q \gg 0$, we have
$$\reg(I^q) = qd + \reg(\phi).$$
\end{theorem}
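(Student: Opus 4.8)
The plan is to refine the cohomological argument behind Theorem~\ref{thm.ainvariant}, keeping track of the exact cohomological degree in which each $H^i_{R_+}(I^q)$ becomes nonzero, rather than only of the top shift. Since Theorem~\ref{thm.regbound} already gives $\reg(I^q)=qd+a^*(\phi)+r$ for $q\gg 0$ with $0\le r\le\dim R$, it suffices to prove the sharper statement that $a^i(I^q)=qd+a^i(\phi)$ for every $i\ge 1$ and $q\gg 0$, while $a^0(I^q)$ stays bounded independently of $q$ (which is clear, since $H^0_{R_+}(I^q)\hookrightarrow H^0_{R_+}(R)$ vanishes in high degrees). Granting this, $\reg(I^q)=\max_{i\ge0}\{a^i(I^q)+i\}=qd+\max_{i\ge1}\{a^i(\phi)+i\}=qd+\reg(\phi)$ for $q\gg0$, where the last equality uses $a^0(\phi)=-\infty$. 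This last point is where the Cohen--Macaulay hypothesis first enters: each fiber $\bx_\wp$ is nonempty because $\ix=\im(\phi)$, and since $X$ is irreducible of dimension $\ge 1$ the ideal $\R_{(\wp)+}$ has positive height in $\R_{(\wp)}$, so Cohen--Macaulayness forces $\depth_{\R_{(\wp)+}}\R_{(\wp)}\ge 1$, i.e. $H^0_{\R_{(\wp)+}}(\R_{(\wp)})=0$, hence $a^0(\phi)=-\infty$.

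For the upper bound one argues as in Theorem~\ref{thm.ainvariant}. For $q\gg 0$ the Leray spectral sequence of $\pi$ degenerates by (\ref{eq.newLeray}), so $H^{i-1}(\bx,\O_\bx(p,q))=H^{i-1}(X,\widetilde{I^q}(p+qd))=[H^i_{R_+}(I^q)]_{p+qd}$ for $i\ge 2$, while the four-term Serre--Grothendieck sequence covers $i=0,1$. For $q\gg 0$ the Leray spectral sequence of $\phi$ also degenerates, by Serre vanishing ($\O_\ix(1)$ ample), so $H^{i-1}(\bx,\O_\bx(p,q))=H^0(\ix,R^{i-1}\phi_*\O_\bx(p,q))$; and by (\ref{eq.affine}) together with the Serre--Grothendieck correspondence for $\R_{(\wp)}$, the sheaf $R^{i-1}\phi_*\O_\bx(p,0)$ restricts over $\spec\O_{\ix,\wp}$ to the sheafification of $\big[H^i_{\R_{(\wp)+}}(\R_{(\wp)})\big]_p$, which vanishes once $p>a^i(\R_{(\wp)})$. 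Hence $R^{i-1}\phi_*\O_\bx(p,0)=0$ for $p>a^i(\phi)$. Using the a priori bound $\reg(I^q)\le qd+a^*(\phi)+\dim R$ of Theorem~\ref{thm.regbound} to confine attention to the finitely many $a^i(\phi)<p\le a^*(\phi)+\dim R$ (and Lemma~\ref{lem.vanishing} for $p>a^*(\phi)$), the choice of $q\gg 0$ becomes uniform and we obtain $[H^i_{R_+}(I^q)]_{p+qd}=0$ for all $p>a^i(\phi)$, i.e. $a^i(I^q)\le qd+a^i(\phi)$; summing over $i$ gives $\reg(I^q)\le qd+\reg(\phi)$.

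For the lower bound, choose $\wp_0\in\ix$ with $\reg(\R_{(\wp_0)})=\reg(\phi)$ and let $i_0$ be the cohomological degree realizing it, $\reg(\phi)=a^{i_0}(\R_{(\wp_0)})+i_0$; by the first paragraph $i_0\ge 1$. Then $\big[H^{i_0}_{\R_{(\wp_0)+}}(\R_{(\wp_0)})\big]_{a^{i_0}(\R_{(\wp_0)})}\ne 0$, so by (\ref{eq.affine}) the coherent sheaf $R^{i_0-1}\phi_*\O_\bx(a^{i_0}(\R_{(\wp_0)}),0)$ has nonzero stalk at $\wp_0$, hence is nonzero; being globally generated after a sufficiently positive twist on $\ix$, it satisfies $H^0(\ix,R^{i_0-1}\phi_*\O_\bx(a^{i_0}(\R_{(\wp_0)}),q))\ne 0$ for $q\gg 0$. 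Running the two degenerate Leray spectral sequences and the Serre--Grothendieck correspondence backwards --- directly when $i_0\ge 2$, and via the cokernel sheaf of $\widetilde{\R^2_p}\to\phi_*\O_\bx(p,0)$ when $i_0=1$, exactly as in the two cases of Lemma~\ref{lem.nonvanishing} --- one obtains $\big[H^{i_0}_{R_+}(I^q)\big]_{qd+a^{i_0}(\R_{(\wp_0)})}\ne 0$ for $q\gg0$. Hence $a^{i_0}(I^q)+i_0\ge qd+a^{i_0}(\R_{(\wp_0)})+i_0=qd+\reg(\phi)$, so $\reg(I^q)\ge qd+\reg(\phi)$.

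The delicate point is the interchange of the ``for $q\gg0$'' quantifiers: turning a fiberwise nonvanishing of $H^{i_0}_{\R_{(\wp_0)+}}(\R_{(\wp_0)})$ into a nonvanishing of $H^{i_0}_{R_+}(I^q)$ is legitimate only once one knows $\reg(I^q)$ grows linearly with slope exactly $d$, so that only finitely many twists $\O_\bx(p,q)$ with $p$ in a bounded interval can contribute --- this is precisely what Theorem~\ref{thm.regbound} (via Theorem~\ref{thm.ainvariant}) supplies, and without it one cannot run the computation degree by degree. The Cohen--Macaulay hypothesis on the fibers is used to force the degree-$0$ local cohomology of each $\R_{(\wp)}$ to vanish (so that $\reg(\phi)$ is realized in a positive cohomological degree, which matters because $a^0(I^q)$ is bounded) and to keep the nonvanishing degree of each fiber's top local cohomology stably visible in the higher direct images of $\phi$; in the general case one is left only with the two-sided bound of Theorem~\ref{thm.regbound}, which is why the exact value remains Conjecture~\ref{conj.reg}.
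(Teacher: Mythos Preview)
Your approach differs from the paper's and carries a genuine gap in the upper-bound half. The paper's key observation---which you do not make---is that irreducibility of $X$ forces every $\bx_\wp=\phi^{-1}(\spec\O_{\ix,\wp})$ to be an \emph{open} subset of the irreducible $\bx$, so $\dim\R_{(\wp)}=\dim\bx+1=\dim R$ for every $\wp$. The Cohen--Macaulay hypothesis then gives $H^i_{\R_{(\wp)+}}(\R_{(\wp)})=0$ for all $i<\dim R$, i.e.\ $a^i(\phi)=-\infty$ for $i<\dim R$ and $a^*(\phi)=a^{\dim R}(\phi)$. Two things follow at once: $\reg(\phi)=a^*(\phi)+\dim R$, so the upper bound $\reg(I^q)\le qd+\reg(\phi)$ is literally Theorem~\ref{thm.regbound}; and $r_\phi=\dim R\ge 2$, so the $r_\phi\ge 2$ case already proved in Theorem~\ref{thm.ainvariant} yields $[H^{\dim R}_{R_+}(I^q)]_{qd+a^*(\phi)}\ne 0$, giving the matching lower bound.

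You use Cohen--Macaulayness only to get $a^0(\phi)=-\infty$, and then try to prove the sharper statement $a^i(I^q)\le qd+a^i(\phi)$ for every $i\ge 1$ by restricting to the ``finitely many'' integers $a^i(\phi)<p\le a^*(\phi)+\dim R$. But under the very hypothesis of the theorem this range is \emph{not} finite: for each $1\le i<\dim R$ one has $a^i(\phi)=-\infty$, so your uniformity-in-$q$ argument collapses, and the conclusion $a^i(I^q)\le qd+a^i(\phi)=-\infty$ would assert $H^i_{R_+}(I^q)=0$ for all such $i$ and $q\gg 0$, which is not what you have shown (and need not be true). Your lower-bound argument is correct, and once one realizes that in this setting $i_0=\dim R$ and $a^{i_0}(\R_{(\wp_0)})=a^*(\phi)$, it coincides with the paper's. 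The repair is simply to record the dimension computation above: then the upper bound is Theorem~\ref{thm.regbound} with no further work, and your refined degree-by-degree upper bound is unnecessary.
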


\begin{proof} Let $l = \dim X \ge 1$. Since $X$ is irreducible, $\bx$ is also irreducible. Moreover, for any point $\wp \in \ix$, $\spec \O_{\ix, \wp}$ is an open neighborhood of $\wp$, and so $\bx_\wp = \phi^{-1}(\spec \O_{\ix, \wp})$ is an open subset in $\bx$. Thus, $\dim \bx_\wp = \dim \bx = \dim X$.

By the hypothesis, for each $\wp \in \ix$, $\R_{(\wp)}$ is a Cohen-Macaulay ring of dimension $\dim \bx_\wp + 1 = l+1$. This implies that $a^*(\R_{(\wp)}) = a^{l+1}(\R_{(\wp)})$ and $\reg(\R_{(\wp)}) = a^{l+1}(\R_{(\wp)}) + (l+1)$. Therefore, 
\begin{align}
a^*(\phi) & = a^{l+1}(\phi), \label{eq.ainv} \\
\reg(\phi) & = a^*(\phi) + l+1. \label{eq.reg}
\end{align}
It follows from (\ref{eq.ainv}) that $r_\phi = l+1 \ge 2$. By the same arguments as the last part of the proof of Theorem \ref{thm.ainvariant}, we have that for $q \gg 0$, $\reg(I^q) \ge qd + a^*(\phi) + r_\phi = qd + a^*(\phi) + \dim R$. This, together with Theorem \ref{thm.regbound}, implies that for $q \gg 0$, $\reg(I^q) = qd + a^*(\phi) + \dim R.$ The conclusion now follows from (\ref{eq.reg}).
\end{proof}

\begin{corollary} \label{cor.reg}
Let $X = \proj R \subseteq \PP^n$ be an irreducible projective scheme of dimension at least 1, and let $I \subseteq R$ be a homogeneous ideal generated by forms of degree $d$. Assume that $\R$ is a Cohen-Macaulay ring. Then for $q \gg 0$,
$$\reg(I^q) = qd + \reg(\phi).$$
\end{corollary}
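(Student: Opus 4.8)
The plan is to derive Corollary~\ref{cor.reg} as an immediate consequence of Theorem~\ref{thm.reg}. Since $X$ is irreducible of dimension at least $1$ and $I$ is generated by forms of degree $d$, the only hypothesis of Theorem~\ref{thm.reg} left to verify is that every fiber $\bx_\wp$ is arithmetically Cohen--Macaulay, i.e., that $\R_{(\wp)}$ is Cohen--Macaulay for each closed point $\wp \in \ix$. Granting that, Theorem~\ref{thm.reg} yields $\reg(I^q) = qd + \reg(\phi)$ for $q \gg 0$, which is exactly the conclusion. So the entire content of the proof is the passage from Cohen--Macaulayness of $\R$ to Cohen--Macaulayness of each $\R_{(\wp)}$.

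To carry this out, I would argue as follows. Fix $\wp \in \ix$; it is a homogeneous prime of $S = k[F_0t,\dots,F_mt] = \R^2_0$ corresponding to a point of $\ix$, hence not the irrelevant ideal of $S$, so $F_it \notin \wp$ for some index $i$. Forming the homogeneous localization $\R_\wp = \R \otimes_S S_\wp$ only inverts homogeneous elements of $S$, each of which is bi-homogeneous as an element of $\R$ (a $t$-homogeneous element of $S$ of $t$-degree $j$ has bidegree $(jd,j)$). Hence $\R_\wp$ is again bi-graded, and it is Cohen--Macaulay because localization preserves the Cohen--Macaulay property. Now $u := F_it$ is a unit of bidegree $(d,1)$ in $\R_\wp$, and for any bi-homogeneous $z$ of bidegree $(p,q)$ the element $zu^{-q}$ has bidegree $(p-qd,0)$, hence lies in the $t$-degree-$0$ part $\R_{(\wp)}$; therefore $\R_\wp = \bigoplus_q u^q\,\R_{(\wp)} = \R_{(\wp)}[u,u^{-1}]$ is a Laurent polynomial ring in one variable over $\R_{(\wp)}$. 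Since $\R_{(\wp)} \hookrightarrow \R_{(\wp)}[u,u^{-1}]$ is faithfully flat and the larger ring is Cohen--Macaulay, descent along faithfully flat homomorphisms forces $\R_{(\wp)}$ to be Cohen--Macaulay, completing the verification. (This is the standard fact, used without comment in Example~\ref{example.minor}, that localization and homogeneous localization preserve Cohen--Macaulayness.)

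This argument is purely formal, and I do not foresee a genuine obstacle: the one point requiring a little care is the bookkeeping with the two gradings on $\R$ and the observation that the unit $F_it$ trivializes the $t$-grading after localization, which is precisely what reduces the Cohen--Macaulay property of the fiber ring $\R_{(\wp)}$ to that of $\R$ itself.
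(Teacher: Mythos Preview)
Your proposal is correct and follows exactly the paper's approach: the paper's proof of Corollary~\ref{cor.reg} is the one-line observation that $\R$ Cohen--Macaulay implies each $\R_{(\wp)}$ is Cohen--Macaulay, whence Theorem~\ref{thm.reg} applies. You have simply supplied the justification (via the Laurent-polynomial identification $\R_\wp \simeq \R_{(\wp)}[u,u^{-1}]$ and descent) for this implication, which the paper asserts without proof both here and in Example~\ref{example.minor}.
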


\begin{proof} Since $\R$ is Cohen-Macaulay, so is $\R_{(\wp)}$ for any $\wp \in \ix$. Thus, each fiber $\bx_\wp$ is arithmetically Cohen-Macaulay. The conclusion follows from Theorem \ref{thm.reg}.
\end{proof}

\begin{example} Let $R$ and $I$ be as in Example \ref{example.minor}. In this case, $I$ is generated in degree $r$. As noted before, the Rees algebra $\R$ is Cohen-Macaulay. Notice further that $X = \proj R$ is an irreducible projective scheme. Thus, by Corollary \ref{cor.reg}, we have
$$\reg(I^q) = qr + \reg(\phi) \ \forall \ q \gg 0.$$
\end{example}

\begin{example} Let $R = k[x_{ij}]_{1 \le i \le r, 1 \le j \le s}$ and let $I$ be the ideal generated by $t \times t$ minors of $M = (x_{ij})_{1 \le i \le r, 1 \le j \le s}$ for some $1 \le t \le \min \{r,s\}$. By \cite[Theorem 3.5]{EHu} and \cite[Corollary 3.3]{Bruns}, the Rees algebra $\R$ of $I$ is Cohen-Macaulay. Also, $X = \proj R$ is an irreducible projective scheme. It follows from Corollary \ref{cor.reg} that
$$\reg(I^q) = qt + \reg(\phi) \ \forall \ q \gg 0.$$
\end{example}

\begin{example} Let $R$ be a Cohen-Macaulay graded domain of dimension at least 2. Let $I$ be either a complete intersection, or an almost complete intersection that is also generically a complete intersection. Assume that $I$ is generated in degree $d$. Then the Rees algebra $\R$ of $I$ is Cohen-Macaulay (cf. \cite{B,V}). By Corollary \ref{cor.reg}, we have 
$$\reg(I^q) = qd + \reg(\phi) \ \forall \ q \gg 0.$$
\end{example}

We now prove a special case of Conjecture \ref{conj} (when $c = 1$).

\begin{theorem} \label{thm.gen}
Let $R$ be a standard graded $k$-algebra of dimension $(l+1)$. Let $I = (L_0, \dots, L_{l+1}) \subseteq R$, where $L_i$s are linear forms whose zeros set is empty in $X = \proj R$. Then for $q \gg 0$, 
$$\reg(I^q) \le q + l.$$
\end{theorem}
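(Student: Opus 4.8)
By Theorem \ref{thm.regbound}, since $I$ is generated in degree $d=1$ and $\dim R = l+1$, we have $\reg(I^q) \le q + a^*(\phi) + (l+1)$ for $q \gg 0$. Thus it suffices to prove that $a^*(\phi) \le -1$, i.e. that for every closed point $\wp \in \ix$ and every $i \ge 0$ we have $a^i(\R_{(\wp)}) \le -1$, equivalently $\big[H^i_{\R_{(\wp)+}}(\R_{(\wp)})\big]_p = 0$ for all $p \ge 0$. Since the hypothesis is that $V(I) = \emptyset$ in $X$, the $L_i$ define an honest morphism $\varphi \colon X \to \PP^{l+1}$; moreover $\varphi$ is finite onto its image $\ix$ (the linear system $|L_0,\dots,L_{l+1}|$ is base-point free and the fibers are finite since a general hyperplane section cuts out the empty set, or rather: the blowup $\pi \colon \bx \to X$ is an isomorphism because $I$ is $\m$-primary up to saturation... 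I should be careful here — in fact with $V(I)=\emptyset$ projectively the ideal $I$ is everywhere locally a principal ideal times a unit ideal, so $\pi$ is an isomorphism and $\bx = X$). This identifies $\phi$ with the finite morphism $\varphi \colon X \to \ix \subseteq \PP^{l+1}$, and $\R_{(\wp)}$ with the homogeneous localization of the section ring of $X$ twisted appropriately, i.e. $\bx_\wp = X \times_{\ix} \spec \O_{\ix,\wp}$ is $\proj$ of a module finite over $\O_{\ix,\wp}[u]$.

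The next step is to compute $a^i(\R_{(\wp)})$ directly. Writing $T = \R_{(\wp)}$, which is an $\NN$-graded ring finite as a module over the local ring $\O_{\ix,\wp}$ in degree $0$, I want to show $H^i_{T_+}(T)$ vanishes in nonnegative degrees. Because $X$ is a domain with isolated singularity and $\varphi$ is finite, the fiber $\bx_\wp$ over a general point is a disjoint union of reduced points (degree-one maps on the generic fiber since $\varphi$ is birational onto its image when $L_i$ are general — this is where "general linear forms" enters), so $T$ looks like a twisted polynomial-ring-like object and the local cohomology concentrates in negative degrees; over the finitely many special points one needs the isolated-singularity hypothesis to control the depth. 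Concretely: $[H^i_{T_+}(T)]_p \cong H^{i-1}(\bx_\wp, \O_{\bx_\wp}(p))$ for $i \ge 2$ and sits in the standard four-term exact sequence for $i = 0,1$, and one shows these sheaf cohomology groups vanish for $p \ge 0$ by relating $\O_{\bx_\wp}(1)$ to $\varphi^* \O_{\ix,\wp}$-twists, using that $\O_X(1)$ restricted to a finite fiber over $\PP^{l+1}$ is very ample, plus Serre vanishing / the fact that the fiber is affine-linearly embedded. The key inequality $a^i(\R_{(\wp)}) \le -1$ should follow from a Grothendieck-type duality or a direct diagonal argument: the generators of $\R_{(\wp)}$ in the $t$-grading are the images of $L_j t$, which have degree zero in the homogeneous localization, so $\R_{(\wp)}$ is generated over $\O_{\ix,\wp}$ in degree $\le 1$, and its canonical module has generators in strictly positive degree because $R$ (hence its relevant localizations) is a domain with depth $\ge 1$ away from finitely many points handled by the isolated-singularity assumption.

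**Main obstacle.** The hard part will be the local analysis at the finitely many points where $X$ may be singular and where $\varphi$ fails to be étale: there I cannot simply say the fiber is a union of reduced points, and I must extract from "$R$ is a domain with isolated singularity" enough regularity (e.g. that $\R_{(\wp)}$ has depth at least, say, $\dim \bx_\wp$, or a precise statement about the degrees of $H^0_{T_+}$ and $H^1_{T_+}$) to still force $a^i(\R_{(\wp)}) \le -1$. I expect to handle this by a dimension shift: remove the finitely many bad points, conclude $a^i \le -1$ on the punctured base, and then use a long exact sequence comparing $\R_{(\wp)}$ at a bad point with its behavior after inverting a general linear form, invoking that the non-Cohen-Macaulay locus (equivalently the locus where the fiber is non-arithmetically-Cohen-Macaulay) is contained in the image of the singular locus, which is a finite set of points, so the relevant local cohomology modules are finite-length and their degrees are constrained by a direct computation with $l+1+1 = l+2$ general linear forms on an $(l+1)$-dimensional ring (a complete intersection's worth plus one, giving the expected $\epsilon = \lfloor l/1 \rfloor = l$ shift). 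Once $a^*(\phi) \le -1$ is in hand, Theorem \ref{thm.regbound} immediately yields $\reg(I^q) \le q + (l+1) + (-1) = q + l$ for $q \gg 0$, completing the proof.
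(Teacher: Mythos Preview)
Your opening reduction is right and matches the paper: Theorem \ref{thm.regbound} gives $\reg(I^q) \le q + a^*(\phi) + (l+1)$, so it suffices to show $a^*(\phi) \le -1$; and since $V(I) = \emptyset$ in $X$ the blowup is an isomorphism, $\bx \cong X$, and $\phi$ is the honest morphism $\varphi$ with zero-dimensional fibers.

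From there, however, your proposal goes astray in two ways. First, you repeatedly invoke hypotheses that are not in the theorem: you assume $R$ is a domain with isolated singularity and that the $L_i$ are \emph{general}. Those hypotheses belong to Conjecture \ref{conj}, but Theorem \ref{thm.gen} assumes only that $R$ is standard graded and that the $L_i$ have empty common zero locus. Your entire ``main obstacle'' --- the local analysis at singular or non-\'etale points, the stratification of $\ix$ into good and bad loci, the finite-length long exact sequence argument --- rests on these extra hypotheses, so as written you would at best be proving a weaker statement than the theorem claims.

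Second, the obstacle you identify does not actually exist; the paper's argument is short and uniform in $\wp$. You already observed that $\phi$ has zero-dimensional fibers; the paper uses exactly this, via \cite[Corollary III.11.2]{Hartshorne}, to conclude $R^j\phi_* \O_\bx(p,0) = 0$ for all $j > 0$ and all $p$. Through (\ref{eq.affine}) this yields $H^j(\bx_\wp, \widetilde{\R_{(\wp)}}(p)) = 0$ for every $j > 0$ and every $\wp$, hence $a^j(\phi) = -\infty$ for all $j \ge 2$ --- no case analysis, no duality, no canonical module. For $a^0(\phi), a^1(\phi) \le -1$ one must check that the natural map $(\R^2_p)_{(\wp)} \to H^0(\bx_\wp, \widetilde{\R_{(\wp)}}(p))$ is an isomorphism for $p \ge 0$; the paper handles $p = 0$ directly from $\phi_* \O_{\bx_\wp} = \O_{\ix,\wp}$ and Lemma \ref{lem.grading}, and then $p \ge 1$ follows simply because $R$ being standard graded makes $\R_{(\wp)}$ generated in degree $1$. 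That is the missing idea: exploit the zero-dimensionality of the fibers globally through the vanishing of $R^j\phi_*$, rather than analyzing closed fibers one by one, and use the standard grading of $R$ to handle $H^0$ and $H^1$ uniformly.
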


\begin{proof} 
By Theorem \ref{thm.regbound}, to prove our assertion it suffices to show that $a^*(\phi) + \dim R \le l$; that is, $a^*(\phi) \le -1$. 

Since the zeros set of $L_i$s is empty, we have $\bx \simeq X$. Also, it follows from \cite{BE} that the fiber over any point in $\ix$ has dimension 0. Thus, by \cite[Corollary III.11.2]{Hartshorne}, $R^j \phi_* (\O_\bx(p,0)) = 0$ for any $j > 0$. In particular, for any $\wp \in \ix$, $H^j(\bx_\wp, \widetilde{\R_{(\wp)}}(p))\!\!\!^{\widetilde{\quad \quad}} = 0$ for all $j > 0$. Since $\spec \O_{\ix, \wp}$ is an affine scheme, we have $H^j(\bx_\wp, \widetilde{\R_{(\wp)}}(p)) = 0$ for all $j > 0$. This implies that for any $\wp \in \ix$, $\big[H^{j+1}_{\R_{(\wp)}+}(\R_{(\wp)})\big]_p = 0$ for all $j> 0$. Thus, $a^j(\phi) = -\infty$ for all $j \ge 2$.

It remains to show that $a^0(\phi) \le -1$ and $a^1(\phi) \le -1$. By the Serre-Grothendieck correspondence between local and sheaf cohomology, this is equivalent to showing that for any $\wp \in \ix$, the natural map $(\R^2_p)_{(\wp)} \rightarrow H^0(\bx_\wp, \widetilde{\R_{(\wp)}}(p))$ is an isomorphism for any $p \ge 0$. For $p = 0$, we have $H^0(\bx_\wp, \widetilde{\R_{(\wp)}}) = H^0(\bx_\wp, \O_{\bx_\wp}) = H^0(\spec \O_{\ix, \wp}, \phi_* (\O_{\bx_\wp})) = H^0(\spec \O_{\ix, \wp}, \O_{\ix, \wp}) = S_{(\wp)}$, where $S$ denotes the homogeneous coordinate ring of $\ix$ in $\PP^{l+1}$. By Lemma \ref{lem.grading}, we now have $H^0(\bx_\wp, \widetilde{\R_{(\wp)}}) = k[L_0t, \dots, L_{l+1}t]_{(\wp)} = (\R^2_0)_{(\wp)}$. For $p \ge 1$, since $\R_{(\wp)}$ is generated in degree $1$ (because $R$ is a standard graded algebra), we also have $H^0(\bx_\wp, \widetilde{\R_{(\wp)}}(p)) = (\R_{(\wp)})_p = (\R^2_p)_{(\wp)}$. The theorem is proved.
\end{proof}

%%%%%%%%%%%%%%%%%%%%%%%%%%%%%%%%%%%%%%%%%%%%%%%%%%%%%%%

\end{document}